\numberwithin{equation}{section}
\theoremstyle{plain}
\newtheorem{thm}{Theorem}[section]
\newtheorem{lem}[thm]{Lemma}
\newtheorem{prop}[thm]{Proposition}
\newtheorem{result}[thm]{Result}
\theoremstyle{definition}
\newtheorem{defn}[thm]{Definition}
\newtheorem{fact}[thm]{Fact}
\theoremstyle{remark}
\newtheorem{remark}[thm]{Remark}
\newtheorem{exmpl}[thm]{Example}
\newcommand{\CC}{\mathbb{C}^2}
\newcommand{\CCC}{\mathbb{C}^3}
\newcommand{\Cn}{\mathbb{C}^n}
\newcommand{\cplx}{\mathbb{C}}
\newcommand{\RR}{\mathbb{R}}
\newcommand{\tor}{\mathbb{T}}
\newcommand{\bdy}{\partial}
\newcommand{\OM}{\Omega}
\newcommand\nball[1]{\mathbb{B}^{{#1}}}
\newcommand\scone[2]{\boldsymbol{\mathcal{K}}({#1};{#2})}
\newcommand{\clcone}{\boldsymbol{\overline{\mathcal{K}}}}
\newcommand{\clBcone}{\boldsymbol{\overline{\mathcal{K}}_*}}
\newcommand\smcone[2]{\overline{\boldsymbol{\mathcal{K}}({#1}};{#2})}
\newcommand\vcone[1]{\overline{\boldsymbol{\mathfrak{C}}}_{{#1}}}
\newcommand{\weg}{\boldsymbol{\mathscr{W}}}
\newcommand{\clweg}{\boldsymbol{\overline{\mathscr{W}}}}
\newcommand{\sectr}{\mathcal{S}}
\newcommand{\smoo}{\mathcal{C}}
\newcommand{\psh}{{\sf psh}}
\newcommand{\eps}{\varepsilon}
\newcommand{\al}{\alpha}
\newcommand{\zt}{\zeta}
\newcommand{\zbar}{\overline{z}}
\newcommand{\xibar}{\overline{\xi}}
\newcommand{\tht}{\theta}
\newcommand{\Lam}{\Lambda}
\newcommand{\zahl}{\mathbb{Z}}
\newcommand{\cplxlns}{\boldsymbol{{\sf L}}}
\newcommand{\exepc}{\boldsymbol{\mathcal{E}}}
\newcommand{\nat}{\mathbb{N}}
\newcommand{\tail}{\mathcal{R}}
\newcommand{\mat}{\mathcal{M}}
\newcommand{\bmew}{\boldsymbol{\overline{\eta}}}
\newcommand{\bnew}{\boldsymbol{\overline{\nu}}}
\newcommand\sfrac[2]{\genfrac{}{}{}{1}{#1}{#2}}
\newcommand{\hess}{\mathfrak{H}_{\mathbb{C}}}
\newcommand{\mi}{\mathfrak{Im}}
\newcommand{\er}{\mathfrak{Re}}
\newcommand{\lrarw}{\longrightarrow}
\newcommand\levi[1]{\mathfrak{L}{#1}}
\newcommand{\jayP}{\widetilde{{}^j P}}
\newcommand\rg[1]{{\sf Arg}({#1})}
\newcommand\LeviDeg[1]{\omega({#1})}
\newcommand{\lcm}{{\rm lcm}}
\newcommand{\bcdot}{\boldsymbol{\cdot}}
\newcommand{\nrml}{\boldsymbol{\mathcal{N}}}
\newcommand{\gee}{\widetilde{G}}
\newcommand{\what}{\widehat{w}}
\newcommand{\newR}{\widetilde{\varrho}}
\newcommand{\newRem}{\widetilde{r}}
\newcommand{\Odr}{\boldsymbol{{\sf O}}}
\newcommand\Mixdrv[2]{\partial^2_{{#1}\overline{{#2}}}}
\newcommand\monodrv[1]{\partial_{{#1}}}
\newcommand{\laplc}{\bigtriangleup}
\begin{document}

\title[Model domains and bumping]{Model pseudoconvex domains and bumping} 

\author{Gautam Bharali}
\address{Department of Mathematics, Indian Institute of Science, Bangalore - 560012}
\email{bharali@math.iisc.ernet.in}
\thanks{This work is supported in part by a grant from the UGC under DSA-SAP, Phase~IV}
\keywords{Bumping, finite-type domain, plurisubharmonic function, model domain,
 weighted-homogeneous function}
\subjclass[2000]{Primary 32F05, 32T25}

\begin{abstract}
The Levi geometry at weakly pseudoconvex boundary points of domains in $\Cn, \ n\geq 3$,
is sufficiently complicated that there are no universal model domains with which
to compare a general domain. Good models may be constructed by bumping outward 
a pseudoconvex, finite-type $\OM\subset\CCC$ in such a way that: $i)$ pseudoconvexity
is preserved, $ii)$ the (locally) larger domain has a simpler defining function, and
$iii)$ the lowest possible orders of contact of the bumped domain with $\bdy\OM$,
at the site of the bumping, are realised. When $\OM\subset\cplx^n, \ n\geq 3$, it
is, in general, hard to meet the last two requirements. Such well-controlled bumping
is possible when $\OM$ is $h$-extendible/semiregular. We examine a family of domains
in $\CCC$ that is {\em strictly larger} than the family of $h$-extendible/semiregular
domains and construct explicit models for these domains by bumping.
\end{abstract}

\maketitle

\section{Introduction}\label{S:intro}

A rather successful strategy for understanding a pseudoconvex domain 
in $\Cn, n\geq 2$, involves carefully deforming its boundary about some boundary 
point without destroying pseudoconvexity
so that the new domain ``well approximates'' the original but
has a much simpler defining function. In many instances, one requires the model domain
to be deformed outwards about the chosen boundary point. The latter procedure is formalised as
follows:
\begin{itemize}
 \item[$(*)$] Given a smoothly bounded pseudoconvex domain $\OM\subset\Cn, \ n\geq 2$, 
 and $\zt\in\bdy\OM$, find a neighbourhood $U_\zt$ of $\zt$ and a $\smoo^2$-smooth function
 $\rho_\zt\in\psh(U_\zt)$ such that
 \begin{itemize}
  \item[\textbullet] $\rho_\zt^{-1}\{0\}$ is a smooth hypersurface in $U_\zt$ that is 
  pseudoconvex from the side $U^{-}_\zt:=\{z:\rho_\zt(z)<0\}$; and
  \item[\textbullet] $\rho_\zt(\zt)=0$, but 
  $(\overline{\OM}\setminus\{\zt\})\bigcap U_\zt\varsubsetneq U^{-}_\zt$.
 \end{itemize}
\end{itemize}
We shall call the triple $(\bdy\OM,U_\zt,\rho_\zt)$ a {\em local bumping of $\OM$ about
$\zt$}. Fornaess and Sibony \cite{fornaessSibony:cpfwpd89} devised a procedure of bumping 
to show that every boundary point of a finite-type domain in $\CC$ admits a holomorphic peak function.
The works \cite{fornaess:sneDbC286, range:ikHeDbpdftC290} are just some of the many
applications of bumping in $\CC$.
\smallskip

Going beyond $\CC$, Diederich and Fornaess \cite{diederichFornaess:phmpdrab79} 
have shown that if $\Omega$ is a bounded,
pseudoconvex domain with real-analytic boundary, then local bumpings always exist about each
$\zt\in\bdy\OM$. However, the applications cited above rely on a second ingredient: that 
the bumpings constructed are, in some sense, well-adapted to the pair $(\OM,\zt)$. To be more
specific, it is highly desirable for a local bumping $(\bdy\OM,U_\zt,\rho_\zt)$ to have the
following two properties:
\begin{itemize}
 \item[(B1)] The orders of contact of $\bdy\OM\cap U_\zt$ with
 $\rho_{\zt}^{-1}\{0\}$ at $\zt$ along various directions 
 $V\in T_\zt(\bdy\OM)\bigcap iT_\zt(\bdy\OM)$ are the {\em lowest possible}.
 \item[(B2)] The function $\rho_\zt$ is as simple as possible and is explicitly known.
\end{itemize} 
The difficulty with the results of \cite{diederichFornaess:phmpdrab79} 
is that when $\OM\Subset\Cn$ and $n\geq 3$,
the order of contact between $\bdy\OM$ and $\rho_\zt^{-1}\{0\}$ at $\zt$ along certain
complex-tangential directions can be very high. Furthermore, the great generality of the scope
of \cite{diederichFornaess:phmpdrab79} makes it very hard for an explicit equation for $\rho_\zt$
to be discernable.
\smallskip

Yet, there has been some progress --- which generalises the situation in 
\cite{fornaessSibony:cpfwpd89} --- in achieving $(*)$ so that the local bumping has the
properties (B1) and (B2). To appreciate this, we need to define the Catlin normal form for a 
pair $(\OM,\zt)$. To this end, we refer the reader to Catlin \cite{catlin:bipd84} for a 
definition of the {\em Catlin multitype}.

\begin{defn}\label{D:CatNormForm}
Let $\OM$ be a pseudoconvex domain in $\cplx^{n+1}, \ n\geq 1$, having $\smoo^\infty$-smooth 
boundary and let $\zt\in \bdy\OM$. Let $(1,m_1,\dots,m_n)$ be the Catlin multitype of $\bdy\OM$ at
$\zt$ and suppose $m_j< \infty, \ j=1,\dots, n$. Then, by \cite[Main Theorem]{catlin:bipd84},
there exists a local holomorphic coordinate system $(U_\zt;w,z_1,\dots,z_n)$ centered 
at $\zt$, which we shall call {\em distinguished coordinates}, such that
\begin{equation}\label{E:CatNormForm}
 \OM\bigcap U_\zt \ = \ \left\{(w,z)\in V_\zt:\er{w}+P(z)+Q(z)+r(\mi{w},z)<0\right\},
\end{equation}
where $V_\zt$ is a neighbourhood of $0\in \cplx^{n+1}$, and where
\begin{itemize}
 \item $P$ is an $(m_1,\dots, m_n)$-homogeneous plurisubharmonic polynomial in $\Cn$ 
 that has {\em no pluriharmonic terms};
 \item $D^\alpha\overline{D}^\beta Q(0)=0$ whenever $(\alpha,\beta)\in \nat^n\times \nat^n$
 such that $\sum_{j=1}^n m_j^{-1}(\alpha_j+\beta_j)\leq 1$; and
 \item There exists a smooth function $S$ defined about $0\in \RR$ such that
 \[
  \frac{r(x,z)-S(x)}{|x|} \ = \ o\left(\sqrt{|z_1|^{m_1}+\dots+|z_n|^{m_n}}\right) \;\; \text{as 
  $(x,z)\to 0$.}
 \]
\end{itemize}
Any presentation of $\OM$ in a neighbourhood of $\zt\in\bdy\OM$ having the form \eqref{E:CatNormForm}
will be called a {\em Catlin normal form for $(\OM,\zt)$}.
\end{defn}
\noindent{We remind the reader that, given an $n$-tuple $\Lam=(\lambda_1,\dots,\lambda_n)$, $P$ is said 
to be {\em $\Lam$-homogeneous} if $P(t^{1/\lambda_1}z_1,\dots,t^{1/\lambda_n}z_n)=tP(z_1,\dots,z_n)$
$\forall z=(z_1,\dots,z_n)\in \Cn$ and for every $t>0$. The behaviour of the the term $r$ above is already
implicit in \cite{catlin:bipd84}, but has been made explicit in \cite{yu:wblgK-Rmwpd95} following an argument
by Fornaess--Sibony \cite{fornaessSibony:cpfwpd89}.}
\smallskip

Let the pair $(\OM,\zt)$ be as in Definition~\ref{D:CatNormForm}. If the model domain 
$\OM_P:=\{(w,z)\in \cplx\times\Cn:\er{w}+P(z)<0\}$ --- where $P$ is the polynomial 
occurring in \eqref{E:CatNormForm} --- is of finite type, then it 
was shown independently by Yu in \cite{yu:pfwpd94}, and by Diederich--Herbort in 
\cite{diederichHerbort:pdst94}, that one can make a holomorphic change of the $w$-coordinate,
and find a neighbourhood $V_\zt$ of $0\in \cplx^{n+1}$ and an $(m_1,\dots,m_n)$-homogeneous function 
$H\in\smoo^1(\Cn)$, that is positive away from $0$, such that $(P-H)\in \psh(\Cn)$, and
 \begin{equation}\label{E:DH-Ybump}
 (\overline{\OM}\setminus\{\zt\})\bigcap U_\zt \ \varsubsetneq \ 
	\{(w,z)\in V_\zt:\er{w}+P(z)-H(z)<0\}.
\end{equation}
With $\OM$ as above, whenever the model domain $\OM_P$ associated to its Catlin normal 
form at some $\zt\in \bdy\OM$ is of finite type, we say that $\OM$ 
is {\em $h$-extendible at $\zt\in \bdy\OM$} (or, alternatively, that {\em $(\OM,\zt)$
is $h$-extendible}). It is evident that the bumping represented by the
right-hand side of \eqref{E:DH-Ybump} also has the properties (B1) and (B2). 
Are such elementary model domains available when the pair $(\OM,\zt)$ is
{\em not} $h$-extendible~?
\smallskip

The answer to the above question is definitively, ``No.''  Yu in \cite{yu:pfwpd94} and
Diederich--Herbort in \cite{diederichHerbort:pdst94} have independently shown that:

\begin{fact}\label{F:curves}
An $(m_1,\dots,m_n)$-homogeneous $H$ as in \eqref{E:DH-Ybump} exists if and only if there
are no complex subvarieties of $\Cn$ of positive dimension along which $P$ is
harmonic.
\end{fact}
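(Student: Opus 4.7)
The plan is to establish the two implications separately. The forward direction (existence of $H\Rightarrow$ no harmonic subvariety) is a direct application of the maximum principle; the reverse direction, being constructive, contains the essential content and is where I expect the difficulty to lie.

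\smallskip

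\textbf{$(\Rightarrow)$.} Assume $H$ exists. By the $\Lam$-homogeneity of $P$ (with $\Lam=(m_1,\dots,m_n)$), any positive-dimensional complex subvariety $V$ along which $P$ is pluriharmonic can be dilated via $\dil_t$, $t\to0^+$, to produce a germ at $0$ of a positive-dimensional complex subvariety on which $P$ is still pluriharmonic; so it suffices to rule out such $V$ containing $0$. For such a $V$ pick a non-constant holomorphic parametrisation $\psi:\Dsc\to V$ with $\psi(0)=0$. Then $P\circ\psi$ is harmonic on $\Dsc$: on $\Dsc\setminus\psi^{-1}(V_{\rm reg}^c)$ it is the pullback under a holomorphic map of a pluriharmonic function, hence harmonic, and the discrete exceptional set is removable since $P\circ\psi$ is smooth. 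Since $P-H$ is plurisubharmonic, $(P-H)\circ\psi$ is subharmonic on $\Dsc$, so
\[
-H\circ\psi\ =\ (P-H)\circ\psi-P\circ\psi
\]
is subharmonic on $\Dsc$. But $-H\leq0$ with equality only at $0$, and $-H(\psi(0))=0$, so the strong maximum principle forces $H\circ\psi\equiv0$, contradicting the non-constancy of $\psi$ together with $H>0$ off the origin.

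\smallskip

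\textbf{$(\Leftarrow)$.} Assume the hypothesis. I would take as a first candidate
\[
H_0(z)\ :=\ \sum\nolimits_{j=1}^n|z_j|^{m_j},
\]
which is $\Lam$-homogeneous of weight one, plurisubharmonic on $\Cn$, positive off the origin, and lies in $\smoo^1(\Cn)$ (with higher regularity when the $m_j$ are even). The goal is to produce $\delta>0$ with $P-\delta H_0\in\psh(\Cn)$. By the weight-one $\Lam$-homogeneity of both $P$ and $H_0$, this need only be checked on the compact weighted sphere $\Sigma:=\{z:\sum_j|z_j|^{2m_j}=1\}$. If no such $\delta$ existed, compactness of $\Sigma$ together with compactness of the unit sphere of directions would yield a sequence $(p_k,v_k)$ with $p_k\in\Sigma$, $|v_k|=1$, and
\[
\hess P(p_k)(v_k,\overline{v_k})\ <\ \tfrac{1}{k}\,\hess H_0(p_k)(v_k,\overline{v_k}).
\]
Passing to a subsequential limit $(p_\infty,v_\infty)$ produces a null direction of $\hess P$ at $p_\infty\in\Sigma$. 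The concluding step is to integrate this infinitesimal degeneracy into a genuine positive-dimensional complex subvariety along which $P$ is pluriharmonic, by means of a $\Lam$-rescaled family of holomorphic discs tangent at $p_k$ to $v_k$, thereby contradicting the hypothesis.

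\smallskip

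\textbf{Main obstacle.} The integration step in $(\Leftarrow)$ is the difficult one. A point-wise infinitesimal degeneracy of $\hess P$ need not integrate to an honest complex-analytic object: naive limits of discs at rapidly shrinking scales collapse to a point. Each disc must be $\Lam$-rescaled by an amount dictated by the weights of the degenerating complex-tangential direction, and the higher-order Taylor terms of $P$ must be controlled uniformly along the sequence to guarantee a non-constant limiting disc. This is the technical heart of the Yu and Diederich--Herbort arguments in \cite{yu:pfwpd94,diederichHerbort:pdst94}, and any self-contained proof of the reverse direction must confront it.
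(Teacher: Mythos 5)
The paper does not actually prove Fact~\ref{F:curves}; it cites it with attribution to Yu \cite{yu:pfwpd94} and Diederich--Herbort \cite{diederichHerbort:pdst94}. So there is no in-paper proof to compare against, and I review your proposal on its own terms.

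Your $(\Rightarrow)$ is essentially right, though the dilation step is an unjustified appeal to (the idea behind) Noell's Lemma~4.2, and you do not need it. Argue directly: if $V$ is an irreducible, positive-dimensional, closed subvariety along which $P$ is harmonic, then $-H$ restricted to $V_{\rm reg}$ is plurisubharmonic (being $(P-H)|_{V_{\rm reg}}$, which is psh, plus $-P|_{V_{\rm reg}}$, which is pluriharmonic), and extends to a psh function on $V$ by continuity. Since $H$ is $(m_1,\dots,m_n)$-homogeneous, continuous, and positive off $0$, it is proper, so $-H|_V\to-\infty$ at infinity along the unbounded $V$; hence $-H|_V$ attains a maximum, must be constant by the maximum principle, and this contradicts unboundedness. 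No reduction to $0\in V$ is required.

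Your $(\Leftarrow)$ direction is wrong, not merely incomplete. The candidate $H_0(z)=\sum_j|z_j|^{m_j}$ cannot work in general, and the compactness step does not deliver what you need: it yields only a point $p_\infty$ and a direction $v_\infty$ with $\levi{P}(p_\infty;v_\infty)=0$, i.e.\ $p_\infty\in\LeviDeg{P}$. This does \emph{not} imply the existence of a positive-dimensional complex variety along which $P$ is harmonic --- the set $\LeviDeg{P}$ can be large while no such variety exists (this is precisely the situation Noell's Result~\ref{R:noell} is designed to handle). So the proposed ``integration'' step is not a technical difficulty to be overcome; it is false. For a concrete counterexample with $n=2$, $m_1=m_2=4$, take
\[
P(z_1,z_2) \ := \ |z_1(z_1+z_2)|^2 + |z_2|^4.
\]
This is plurisubharmonic, homogeneous of degree $4$, has no pluriharmonic terms, and a direct computation gives $\det\hess(P)(z)=4\,|z_2|^2\,|2z_1+z_2|^2$, so $\LeviDeg{P}=\{z_2=0\}\cup\{2z_1+z_2=0\}$. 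Since
\[
\levi{P}(v;v) \ = \ 4|v_1(v_1+v_2)|^2 + 4|v_2|^4 \ > \ 0 \quad \forall v\neq 0,
\]
$P$ is harmonic along no complex line through $0$, hence (by Noell's lemma) along no positive-dimensional subvariety; so the $(\Leftarrow)$ hypothesis is satisfied. Nevertheless, the null direction of $\hess(P)(1,0)$ is $v_0:=(1,-2)$, while $\levi{H_0}\bigl((1,0);v_0\bigr)=4>0$, so
\[
\levi{(P-\delta H_0)}\bigl((1,0);v_0\bigr) \ = \ -4\delta \ < \ 0 \quad \forall\delta>0,
\]
i.e.\ $P-\delta H_0\notin\psh(\CC)$ for any $\delta>0$. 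A valid $H$ does exist here, e.g.\ $H:=c_1|z_1(z_1+z_2)|^2+c_2|z_2|^4$ with $0<c_1,c_2<1$, but it must be adapted to the degeneracies of $P$; no fixed ansatz of the form $\sum_j|z_j|^{m_j}$ can serve. This is the core of the Yu/Diederich--Herbort construction, and any proof of $(\Leftarrow)$ must build $H$ from $P$, not choose it in advance.
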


\noindent{The purpose of this paper is to show that if the domain $\OM$ in question is in $\CCC$ 
and, for the chosen boundary point $\zt$, the structure of the exceptional complex varieties along 
which $P$ --- in the notation of \eqref{E:CatNormForm} --- is not too complicated, then
one can construct a local bumping $(\bdy\OM,U_\zt,\rho_\zt)$ that has the properties (B1) and
(B2) and, in particular, $\rho_\zt$ is not much more complicated than the formula occurring in
\eqref{E:DH-Ybump}. The class of pointed domains $(\OM,\zt)$ we shall discuss is {\em strictly
larger} than the class of $h$-extendible pairs. The precise definition will be given in the
next section, but a representative of the class that we will study is:}
\smallskip

\begin{exmpl}\label{Ex:basic}
\[
\OM \ = \ \left\{(w,z)\in\cplx\times\CC:\er(w)+|z_1|^6|z_2|^2+|z_1|^8+
                \tfrac{15}{7}|z_1|^2\er(z_1^6)+|z_2|^{10}<0\right\}.
\]
\end{exmpl}
\smallskip

\noindent{Note that $(\OM,0)$ is {\em not} an $h$-extendible pair because 
$P(z_1,z_2)=|z_1|^6|z_2|^2+|z_1|^8+\tfrac{15}{7}|z_1|^2\er(z_1^6)$ (the above example is
already in Catlin normal form at $\zt=0$) is harmonic along the complex line 
$\{(z_1,z_2)\in\CC:z_1=0\}$. The purpose
of this paper is to construct model bumpings for domains of the class to which Example~\ref{Ex:basic}
belongs.}  
\smallskip

\section{Statement of results}\label{S:results}

We begin with some notation that is relevant to the definitions and results in this section.
Let $P$ be an $(m_1,m_2)$-homogeneous plurisubharmonic polynomial on $\CC$, $m_1,m_2\in \zahl_+$,
and define:
\begin{align}
 \LeviDeg{P}  \ :=& \ \{z\in\CC:\hess(P)(z) \ \text{is not strictly positive definite}\},\notag \\
 \mathfrak{C}(P) \ :=& \ \text{the set of all irreducible complex curves $X\subset\CC$} \notag \\
 & \ \text{such that $P$ is harmonic along the smooth part of $X$}, \notag \\
 \exepc(P) \ :=& \ \text{the class of all curves of the form} \notag \\
 &\qquad \left\{(z_1,z_2):z_1^{m_1/\gcd(m_1,m_2)}=\xi z_2^{m_2/\gcd(m_1,m_2)}\right\}, \notag \\
 & \ \text{$\xi\in \widehat{\cplx {\;}}$, along which $P$ is harmonic}\notag
\end{align}
(understanding that $\xi=\infty\Rightarrow$ $P$ is harmonic along
$\{(z_1,z_2)\in\CC:z_2=0\}$), where $\hess(P)(z)$ denotes the complex Hessian of $P$ at $z\in\CC$.
Here $P$ is the prototype for the lowest-weight polynomial that occurs in a Catlin normal form; see
\eqref{E:CatNormForm}. The set $\exepc(P)$ will be the focus of our attention. This is because 
whenever $\mathfrak{C}(P)\neq \varnothing$, then $\exepc(P)\neq \varnothing$. This follows
by making the obvious modifications --- to allow for the fact that $P$ is 
$(m_1,m_2)$-homogeneous --- to the proof of the following 
observation by Noell:

\begin{result}[Lemma~4.2, \cite{Noell:pfpd93}] Let $P$ be a homogeneous, plurisubharmonic, non-pluriharmonic
polynomial in $\Cn, \ n\geq 2$. Suppose there exist complex-analytic varieties of positive dimension in $\Cn$
along which $P$ is harmonic. Then, there exist complex lines through the origin in $\Cn$ along which $P$
is harmonic.
\end{result}

Note that, in view of Fact~\ref{F:curves}, if a pair $(\OM,\zt)$ is {\em not} $h$-extendible, then 
$\exepc(P)\neq \varnothing$. Our claim is that, {\em loosely speaking}, the Levi-geometry around a 
boundary point $\zt$ is tractable enough to enable simple bumpings about $\zt$ for those
non-$h$-extendible pairs $(\OM,\zt)$ for which a Catlin normal form has the property
that $\mathfrak{C}(P)$ contains no complex curves other than those in $\exepc(P)$. 
However, we need to make this assertion precise. To do so, we need the following object. 
An {\em $(m_1,m_2)$-wedge in $\CC$} is defined to be a set $\weg$ having the 
property that if $(z_1,z_2)\in\weg$, then $(t^{1/m_1}z_1,t^{1/m_2}z_2)\in\weg \ \forall t>0$. The 
terms {\em open $(m_1,m_2)$-wedge} and {\em closed $(m_1,m_2)$-wedge} will have the usual meanings.
Note that when $m_1=m_2=2k$ (the true homogeneous case), an $(m_1,m_2)$-wedge is simply a cone. We
also clarify that, in what follows, ``finite type'' will be used in the sense of D'Angelo. I.e., if
$\bdy\OM$ is of finite type at $\zt\in \bdy\OM$, we will mean that the $1$-type 
$\varDelta_1(\bdy\OM,\zt)$ --- see \cite{d'angelo:rhoca82} for a definition --- is finite.   
We can now present our key definition:

\begin{defn}\label{D:almost-h-ext}
Let $\OM$ be a pseudoconvex domain in $\CCC$ having $\smoo^\infty$-smooth
boundary and let $\zt\in \bdy\OM$. Let $(1,m_1,m_2)$ be the Catlin multitype of $\bdy\OM$ at
$\zt$. We say that $\OM$ is {\em almost $h$-extendible at $\zt\in \bdy\OM$}
(or, alternatively, that {\em $(\OM,\zt)$ is almost $h$-extendible}) if $\bdy\OM$ is of finite
type at $\zt$ and, for some (consequently, for every) system of distinguished coordinates
for $(\OM,\zt)$, the lowest-weight plurisubharmonic polynomial $P$ occurring in the 
normal form \eqref{E:CatNormForm} has the following property:
\begin{center}
\begin{tabular}{p{12.7cm}}
 $\LeviDeg{P}\setminus\bigcup_{X\in\exepc(P)}X$ contains
 no complex subvarieties of positive dimension and is {\em well separated from}
 $\bigcup_{X\in\exepc(P)}X$, i.e., there is a closed $(m_1,m_2)$-wedge
 $\clweg$ that contains $\LeviDeg{P}\setminus\bigcup_{X\in\exepc(P)}X$ and
 satisfies $\clweg\bigcap(\bigcup_{X\in\exepc(P)}X)=\{0\}$.
\end{tabular}
\end{center}
\end{defn}

\begin{remark} It is evident from Definition~\ref{D:CatNormForm} that distinguished 
coordinates\linebreak
$(U_\zt; w,z_1,z_2)$ need not be unique. However, it is easy to see that for each $P$ 
(i.e., relative to each system of distinguished coordinates), whether $\exepc(P)=\varnothing$ or 
$\exepc(P)\neq \varnothing$, and whether or not $\LeviDeg{P}\setminus\bigcup_{X\in\exepc(P)}X$ is well
separated from $\bigcup_{X\in\exepc(P)}X$ in the latter case, is independent of the choice
of distinguished coordinates. Furthermore, $(m_1,m_2)\in \zahl_+\times\zahl_+$. This is {\em not} 
evident from the definition of the Catlin multitype, but is a consequence of 
\cite[Theorem~2.2]{catlin:bipd84}.
\end{remark}
\smallskip

\begin{remark} Observe that if $(\OM,\zt)$ is $h$-extendible (in which case $\exepc(P)=\varnothing$),
then it is almost $h$-extendible.
\end{remark}
\smallskip

We must record one notational point: given a system of distinguished coordinates 
$(U_\zt;w,z_1,\dots,z_n)$, $w$ and $z_j, \ j=1,\dots,n$, will {\em interchangably denote}
the standard coordinates of $\cplx\times\Cn$ in which the normal form is presented as well
as the components of an injective holomorphic map
$(w,z_1,\dots,z_n):(U_\zt,\zt)\lrarw (\cplx^{n+1},0)$. The pairs $(\OM,\zt)$ for which we
can state our main theorem will be required to satisfy the following mild condition:
\begin{itemize}
\item[$(**)$] For each $X\in\exepc(P)$ and each $x\in (X\setminus\{(0,0)\})\bigcap{\sf Dom}(Q)$
\[
 Ord\left(\left. P\right|_{\nrml^x}-P(x),x\right) \ \leq \ 
	Ord\left(\left. Q\right|_{\nrml^x}-Q(x),x\right),
\]
and, if $R>0$ is so small that $\overline{\nball{3}(0;R)}\subset (w,z_1,z_2)(U_\zt)$, then
\[
 |r(\mi{w},z)| \ \lesssim \ |\mi{w}|^{\varDelta_1(\bdy\OM)+\sfrac{1}{\lcm(m_1,m_2)}}
 \;\; \forall(w,z)\in \nball{3}(0;R),
\]  
where $(m_1,m_2)$ and $(P,Q,r)$ are determined by the distinguished coordinates 
$(U_\zt;w,z_1,z_2)$, and $\nrml^x$ denotes the complex line in $\CC_z$ that is normal to the 
curve $X$ at $x$. We have abbreviated the $1$-type $\varDelta_1(\bdy\OM,\zt)$ to 
$\varDelta_1(\bdy\OM)$. 
\end{itemize}
The condition $(**)$ might seem like a severe restriction on the class of almost $h$-extendible
pairs, so a couple of explanations are in order. Given a 
point $a\in \cplx$ and a smooth function $F$ defined in a neighbourhood of $a$ such that
$F(a)=0$, we write:
\begin{multline}
Ord(F,a) \ := \ \min\left\{n\in \zahl_+: \text{{\em some} monomial of degree $n$ 
		in the Taylor}\right. \\
		\left.\text{expansion of $F$ around $a$ has non-zero coefficient}\right\}. \notag
\end{multline}
Now, concerning the first part of $(**)$: $(\left. P\right|_{\nrml^x}-P(x))$ may well vanish to 
infinite order in some {\em real} direction in $\nrml^x$, but we do {\em not} require
$(\left. Q\right|_{\nrml^x}-Q(x))$ to have the same decay along such a direction if
$Ord(\left. P\right|_{\nrml^x}-P(x))<\infty$. (Refer also to the short remark just after 
Theorem~\ref{T:main}.) As for $(**)$: {\em we do not require the condition on $r$} in order to 
obtain a result in spirit of Theorem~\ref{T:main}. But there is a pragmatic reason for imposing this 
condition; the reader is referred to Remark~\ref{Rem:long} following Theorem~\ref{T:main}.
\smallskip

We can now state the main theorem of this paper.

\begin{thm}\label{T:main}
Let $\OM$ be a pseudoconvex domain in $\CCC$ having $\smoo^\infty$-smooth
boundary and let $\zt\in \bdy\OM$. Assume that $\OM$ is almost $h$-extendible at $\zt$. Let
$(1,m_1,m_2)$ be the Catlin multitype of $\bdy\OM$ at $\zt$. Let
\[
 \OM\bigcap U_\zt \ = \ \left\{(w,z)\in V_\zt:\er{w}+P(z)+Q(z)+r(\mi{w},z)<0\right\}
\]
be the local representation of $\OM$ with respect to a system of distinguished coordinates
$(U_\zt;w,z_1,z_2)$ (here, and below, $z:=(z_1,z_2)$). If $\exepc(P)\neq \varnothing$, then:
\begin{enumerate}
 \item[1)] $\exepc(P)$ is a finite set, denoted as $\{X_1,\dots,X_n\}$.

 \item[2)] Suppose $(\OM,\zt)$ satisfies $(**)$. Then, we can find a system of holomorphic
 coordinates $(U_\zt;W,Z_1,Z_2)$ in such a way that $Z=z$, and construct a plurisubharmonic function 
 $G\in \smoo^\infty(\CC\setminus\{0\})\bigcap \smoo^2(\CC)$ whose orders of vanishing at $0\in \CC$
 along various complex directions in $\CC$ are explicitly known and
 such that
 \begin{equation}\label{E:bumped}
 (\overline\OM\setminus \{0\})\bigcap\nball{3}(0;R)  \varsubsetneq 
 \{(W,z)\in \CCC: \er{W}+G(z)<0\}\bigcap\nball{3}(0;R)
 \end{equation}
 for some $R>0$.
\end{enumerate}
The function $G$ has the following description. There exist a non-negative function
$\mathcal{H}_0\in \smoo^\infty(\CC\setminus\{0\})\bigcap \smoo^2(\CC)$ that is
$(m_1,m_2)$-homogeneous; subharmonic functions 
$v_j\in \smoo^\infty(\cplx\setminus\{0\})\bigcap \smoo^2(\cplx)$ that are 
homogeneous of degree $2d_j, \ j=1,\dots, N$, and are strictly subharmonic away 
from $0\in \cplx$; and closed $(m_1,m_2)$-wedges $\clweg^1_j$ and $\clweg^2_j$ satisfying
\[
 \clweg^1_j\setminus\{0\} \varsubsetneq {\rm int}(\clweg^2_j), \;\; j=1,\dots,N,
\]
such that:
\begin{itemize}
 \item $\mathcal{H}_0^{-1}\{0\}=\bigcup_{j=1}^N X_j$.
 \item For each $j\leq N$, $G(z) = (P-\mathcal{H}_0)(z)+v_j(z_{k(j)}) \;\; \forall (z_1,z_2)\in 
	\clweg^1_j\bigcap \nball{2}(0;R)$, where $k(j)=1 \ \text{or} \ 2$ depending on $j=1,\dots, N$.
 \item $G(z_1,z_2)=(P-\mathcal{H}_0)(z_1,z_2) \;\; 
	\forall (z_1,z_2)\in \left(\CC\setminus \cup_{j=1}^N\clweg^2_j\right)\bigcap \nball{2}(0;R)$.
\end{itemize}
\end{thm}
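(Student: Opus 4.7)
My plan handles Parts 1 and 2 in sequence, with the construction of $G$ in Part 2 subdivided into (i) the function $\mathcal{H}_0$, (ii) the functions $v_j$ and wedges $\clweg^1_j, \clweg^2_j$, and (iii) the gluing.

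\textbf{Part 1: finiteness of $\exepc(P)$.} Write $p := m_1/\gcd(m_1,m_2)$ and $q := m_2/\gcd(m_1,m_2)$. Each candidate curve $X_\xi = \{z_1^p = \xi z_2^q\}$ admits a weighted-homogeneous parametrization making $P|_{X_\xi}$ a real-analytic function of one complex parameter whose Laplacian is a finite sum of monomials with coefficients that are polynomials in $\xi$ and $\overline\xi$. Harmonicity of $P|_{X_\xi}$ is the simultaneous vanishing of all these coefficients, a non-trivial polynomial system in $(\xi,\overline\xi)$ --- non-trivial because $P$ has no pluriharmonic terms (Definition~\ref{D:CatNormForm}), so at least one non-pluriharmonic monomial of $P$ contributes. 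The solution set in $\widehat\cplx$ is therefore finite, enumerating $\exepc(P) = \{X_1,\dots,X_N\}$.

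\textbf{Part 2, construction of $\mathcal{H}_0$.} Write each $X_j$ as the zero set of a $(m_1,m_2)$-weighted-homogeneous polynomial $f_j$, and set
\[
 \mathcal{H}_0(z) \ := \ \sum_{j=1}^N c_j\,|f_j(z)|^{2s_j}\,|z_1|^{2a_j}|z_2|^{2b_j},
\]
with exponents $s_j, a_j, b_j$ tuned so each summand is $(m_1,m_2)$-homogeneous of weight $1$ and vanishes on $X_j$ to an order exceeding the Levi-degeneracy order of $P$ normal to $X_j$, and with positive constants $c_j$ sufficiently small. The central claim $(P-\mathcal{H}_0)\in\psh(\CC)$ is verified locally: outside the closed wedge $\clweg \supset \LeviDeg{P}\setminus\bigcup_j X_j$ provided by almost $h$-extendibility, $P$ is strictly plurisubharmonic and smallness of $c_j$ absorbs $\hess(\mathcal{H}_0)$; inside $\clweg$, well-separation bounds $\mathcal{H}_0$ below by a positive $(m_1,m_2)$-homogeneous function; and near each $X_j$ (which lies outside $\clweg$) the vanishing of the $j$th summand to order above the Levi-degeneracy of $P$ preserves $\hess(P-\mathcal{H}_0)\geq 0$. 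This is the natural analogue of the Diederich--Herbort--Yu construction, carried out curve-by-curve, and is consistent with Fact~\ref{F:curves}.

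\textbf{Part 2, $v_j$, wedges, and gluing.} For each $j$, pick $k(j)\in\{1,2\}$ so that $\{z_{k(j)}=0\}$ is transverse to $X_j$ at $0$; choose $v_j$ homogeneous of degree $2d_j$ and strictly subharmonic off $0$, with $d_j$ calibrated so that $v_j(z_{k(j)})$ has $(m_1,m_2)$-weight at most $1$ and its Hessian in the normal direction matches the order of contact $Ord(P|_{\nrml^x}-P(x),x)$ dictated by $(**)$. Take $\clweg^1_j\subsetneq{\rm int}(\clweg^2_j)$ as $(m_1,m_2)$-homogeneous tubular neighborhoods of $X_j$, pairwise disjoint and disjoint from $\clweg$ (possible by well-separation). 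Glue via a Richberg-type regularized maximum of $(P-\mathcal{H}_0)+v_j(z_{k(j)})$ and $(P-\mathcal{H}_0)$ on $\clweg^2_j\setminus\clweg^1_j$, using that both candidates are psh and the bumped one dominates on $\clweg^1_j$ while the unbumped one dominates off $\clweg^2_j$. Then \eqref{E:bumped} follows: off $\bigcup_j X_j$ we get $P-\mathcal{H}_0<P$ strictly; on each $X_j\setminus\{0\}$ we have $P-\mathcal{H}_0=P$, and the strictly positive bump $v_j(z_{k(j)})$ cannot be cancelled by $Q$ in the normal direction thanks to the order inequality in $(**)$; finally the estimate on $r$ in $(**)$ controls the imaginary-$w$ remainder uniformly on $\overline\OM\cap\nball{3}(0;R)$ for $R$ small.

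\textbf{Main obstacle.} The hardest step will be the construction of $\mathcal{H}_0$ and the uniform verification $(P-\mathcal{H}_0)\in\psh(\CC)$ across the three qualitatively distinct regions of $\CC$. Well-separation localizes the problem near each $X_j$, but simultaneously balancing the exponents $s_j,a_j,b_j$ against the Levi-degeneracy profile of $P$ normal to $X_j$ --- so that the Hessian comparison closes for all $j$ at once and the resulting $\mathcal{H}_0$ has the required $\smoo^2$-regularity at the origin --- is the delicate quantitative issue.
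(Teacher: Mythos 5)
Your high-level plan mirrors the four-step outline in Section~2, but two of the concrete steps you propose do not close, and they are precisely the steps the paper devotes its two technical propositions to.

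\textbf{The construction of $\mathcal{H}_0$ cannot be radial in the normal direction.} Your candidate
$\mathcal{H}_0 = \sum_j c_j\,|f_j|^{2s_j}|z_1|^{2a_j}|z_2|^{2b_j}$
depends only on $|f_j(z)|$ in the direction transverse to $X_j$. Near $X_j$, however, the lowest-order part of $P$ in the normal variable is governed by a polynomial which (after the pull-back by $\varPsi$ that straightens $X_j$ into lines, or already in the truly homogeneous case) factors as $U(z_1^a z_2^b)$ for a homogeneous \emph{sub}harmonic $U$ on $\cplx$ (Result~\ref{R:2homo}). In general $\laplc U$ vanishes on a nontrivial union of rays in the circle of arguments. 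Along those rays the Levi form of $P$ is not strictly positive in the normal direction, so subtracting any function that is strictly positive and rotationally symmetric in $f_j$ destroys plurisubharmonicity there --- smallness of $c_j$ does not help, because there is nothing to absorb into. This is exactly the obstruction that Fornaess--Sibony's Lemma~2.4 (Result~\ref{R:subhBump}) is designed to overcome: it produces a $2\pi$-periodic angular profile $h$, engineered so that $U-\delta|\,\bcdot\,|^{2m}h\circ\rg{\bcdot}$ stays subharmonic, and the bump $H_j(z_1,z_2)=F(z_1^a z_2^b)$ with $F(re^{i\theta})=r^{2m}h(\theta)$ is \emph{not} radial. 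Proposition~\ref{P:leadBump} and Step~2 of Proposition~\ref{P:homoBump} then show that this $H_j$ has the required vanishing and Levi estimates. Your ``the $j$th summand vanishes to order above the Levi-degeneracy of $P$'' is therefore not a sufficient criterion; the angular shape matters, not just the order.

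\textbf{The $v_j$ must be extracted from $Q$, and cut-offs rather than a Richberg max are needed for the gluing.} You propose to ``choose $v_j$ homogeneous of degree $2d_j$, calibrated against $Ord(P|_{\nrml^x}-P(x),x)$.'' But the degree $2d_j$ is dictated by the finite type of $\bdy\OM$ along $X_j$: it is $2M_j/\nu$ where $2M_j$ is the lowest degree at which a Taylor summand $P_{n,\rho}$ of the full (psh) defining function restricts non-harmonically to the straightened line --- i.e.\ $v_j$ comes from $Q$, not from $P$ alone nor from $(**)$. Without this link there is no reason the constructed $G$ should satisfy the strict inclusion \eqref{E:bumped}, since near $X_j\setminus\{0\}$ the dominant term of $(\overline\OM)$'s defining function in the normal variable is precisely $u_j$. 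Moreover, your gluing by a regularized maximum of $(P-\mathcal{H}_0)+v_j(z_{k(j)})$ against $(P-\mathcal{H}_0)$ cannot produce the stated structure: since $v_j\geq 0$, the first function dominates \emph{everywhere}, so the max never reverts to $(P-\mathcal{H}_0)$ outside $\clweg^2_j$. The paper instead multiplies the bump by smooth cut-off functions $\Psi_j$ on $S^3$ (Step~1 of Proposition~\ref{P:homoBump}); the Levi-form loss incurred by differentiating $\Psi_j$ is then absorbed using the strict positivity of $\levi P$ away from $\LeviDeg{P}$ and the $\delta$-uniform estimate in Part~(b) of Proposition~\ref{P:leadBump}.

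\textbf{Two lesser remarks.} In Part~1 you argue finiteness directly in the weighted-homogeneous setting; the paper reduces to the truly homogeneous case via $\varPsi$ and quotes Result~\ref{R:harmlines}. Your route is plausible in spirit (it is essentially how Result~\ref{R:harmlines} is proved) but the claim that the resulting polynomial system in $(\xi,\overline\xi)$ is non-trivial needs justification --- ``$P$ has no pluriharmonic terms'' does not directly imply it. Finally, you have not addressed the reduction from the weighted case to the homogeneous one (Step~4 of the paper's outline), which requires pushing functions forward through $\varPsi$ by averaging over its deck transformations (Fact~\ref{F:properPush}), nor the final quadratic change of the $w$-coordinate needed to absorb the remainder $r(\mi w,z)$; the second half of $(**)$ together with the identity $\varDelta_1(\bdy\OM)\geq\varDelta/\nu$ is what makes the latter work.
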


\begin{remark}
We have given a part of the explanation for the assertion that
the first part of condition $(**)$ is mild. Another  reason for this assertion is that
that condition can be dispensed with. I.e., {\em even without that condition}, a bumping theorem
similar to Theorem~\ref{T:main} can be deduced. However, the model that one would get would have a more
complicated defining function than the one above. Since the main point of Theorem~\ref{T:main} is to
construct the simplest bumpings in the almost $h$-extendible case, we will assume that the the first
part of $(**)$ is in effect.
\end{remark}
\smallskip

\begin{remark}\label{Rem:long}
The second part of the condition $(**)$ can also be dispensed with. The theorem that one obtains
without this assumption differs from Theorem~\ref{T:main} in the following respect: in addition
to all the objects asserted to exist in Theorem~\ref{T:main}, there exist polynomials
$q_j\in \cplx[z_1,z_2]$, $j=1,\dots,N$, with $deg(q_j)<2d_j$ such that
\[
 G(z) = (P-\mathcal{H}_0)(z)+\er{q_j}(z)+v_j(z_{k(j)}) \;\; \forall (z_1,z_2)\in
        \clweg^1_j\bigcap \nball{2}(0;R),
\]
where $k(j)=1 \ \text{or} \ 2$ depending on $j=1,\dots, N$. This latter version takes some extra effort
to prove but has the advantage that, for domains in $\CCC$, it subsumes the bumping results in 
\cite{diederichHerbort:pdst94} and \cite{yu:pfwpd94}. However, it is not clear whether the right-hand
side of the above equation is simple enough to be useful in applications. In contrast, if we study
{\em unbounded} domains of the form \eqref{E:CatNormForm} with $Q\in \smoo^\infty(\CC)$ and 
$r\equiv 0$, then Theorem~\ref{T:main} can be applied to several problems on such domains. For example,
using Theorem~\ref{T:main} one can extend (in $\CCC$) \cite[Theorem~1.2]{chnKamimtOhsawa:bBki04} by
Chen--Kamimoto--Ohsawa to a much wider class of unbounded domains (which we shall address in a
forthcoming work). To summarise: because of the utility of the bumpings we get from Theorem~\ref{T:main},
we prefer to restrict attention to domains that satisfy the second part of $(**)$.
\end{remark}
\smallskip

We ought to mention that the idea of ``almost $h$-extendibility'' comes from 
\cite{bharaliStensones:psb09}. The property that the polynomial $P$ occurring in 
Definition~\ref{D:almost-h-ext} possesses is termed {\em Property~(A)} in 
\cite{bharaliStensones:psb09}. In fact, some of the preliminary steps in our proof 
of Theorem~\ref{T:main} are variations on the work in \cite{bharaliStensones:psb09}.
Before we conclude this section, let us glance at the key ideas involved in the proof of 
Theorem~\ref{T:main}. The four primary
ingredients in our proof are as follows:
\begin{itemize}
 \item {\em Step 1:} As in \cite{bharaliStensones:psb09}, we begin by studying the true 
 homogeneous case, i.e. when the Catlin multitype of $\bdy\OM$ at $\zt$ is $(1,2k,2k), \ k\geq 2$.
 For each complex line $X\in \exepc(P)$, we can find a closed cone $\clcone^X$ and a function 
 $H_X$ that is supported in $\clcone^X$ such that  
 $(P-\delta H_X)$ is a bumping of $P$, for $\delta>0$ sufficiently small, inside the
 aforementioned cone. The departure from \cite{bharaliStensones:psb09} here is that
 $H_X$ must be constructed with greater delicacy so that the precise orders of decay of $H_X$,
 as one approaches its zero set, are known. This involves some new ideas.
 \smallskip

 \item {\em Step 2:} We know from \cite[Proposition ~1]{bharaliStensones:psb09} that, in the 
 homogeneous case, there are finitely many complex lines $X_1,\dots, X_N\in \exepc(P)$. We use 
 pseudoconvexity of $\OM$, plus the
 hypothesis of $\bdy\OM$ being of finite type at $\zt$, to extract summands from the
 Taylor expansion of $(P+Q)$ (see Definition~\ref{D:CatNormForm}) that 
 constitute homogeneous subharmonic polynomials corresponding to each $X_j, \ j=1,\dots,N$. 
 A well-known prescription is used to bump these polynomials, which yields
 the functions $v_1,\dots, v_N$ of Theorem~\ref{T:main}.
 \smallskip

 \item {\em Step 3:} The properties of $P$ allow us to patch together all the functions constructed
 up to this point in an appropriate manner. This yields the function $G$ of Theorem~\ref{T:main}.
 We then account for those terms in the Taylor expansion
 of $(P+Q)$ that are not already in $\left. G\right|_{\clcone^X}$ so that we can eventually
 conclude that $G(z)<(P+Q)(z) \ \forall z\in \nball{2}(0;R)\setminus\{(0,0)\}$ for some
 $R>0$ sufficiently small.
 \smallskip

 \item {\em Step 4:} The analogous results for the case when the Catlin multitype of $\bdy\OM$ at 
 $\zt$ is $(1,m_1,m_2)$, $m_1\neq m_2$, are obtained by applying an appropriate proper holomorphic
 map that pulls back the Catlin normal form of $(\OM,\zt)$ to a domain to which the 
 above ideas can be applied. A final change of coordinate accounts for the terms
 in the Taylor expansion of $r$ (see Definition~\ref{D:CatNormForm}).
\end{itemize}
\smallskip

The constructions described in the first step of the above procedure will be explained in
Section~\ref{S:leadBump}. The second and the third steps represent the key technical 
proposition of this paper. They constitute Proposition~\ref{P:homoBump}, which will be
presented in Section~\ref{S:homoBump} below. The final part of the above analysis will
be found in Section~\ref{S:main}
\medskip

\section{Technical preliminaries}\label{S:tech}

The purpose of this section is to list some results that we shall apply many times,
in the sections that follow, to accomplish specific technical tasks.
\smallskip

We begin by stating a proposition that is essential for the proof of Theorem~\ref{T:main}, and
from which Part~(1) of Theorem~\ref{T:main} follows almost immediately.

\begin{result}[Proposition~1, \cite{bharaliStensones:psb09}]\label{R:harmlines} 
Let $P$ be a plurisubharmonic, non-pluriharmonic polynomial in $\CC$ that is homogeneous of degree 
$2k$. Then, there are at most finitely many complex lines passing through $0\in\CC$ along which $P$ is harmonic.
\end{result}
\smallskip

The next result is a mild refinement of \cite[Lemma~2.4]{fornaessSibony:cpfwpd89} by 
Fornaess \& Sibony. In fact, we do not need to alter anything in the 
proof of \cite[Lemma~2.4]{fornaessSibony:cpfwpd89}. The refinement lies in stating explicitly, 
in Part (c) below, a property of the Fornaess--Sibony construction of which there was no need in
\cite{fornaessSibony:cpfwpd89}, but which we will need in Section~\ref{S:leadBump}. 

\begin{result}\label{R:subhBump} Let $U:\cplx\to\RR$ be a real-analytic,
subharmonic, non-harmonic function that is homogeneous of degree $j$. Assume that
\[
 \{\theta_1,\dots,\theta_M\} := \{\theta\in[0,2\pi):\laplc U(e^{i\theta})=0\} \subset (0,2\pi).
\]
Let $\sigma_0>0$ be so small that 
$[\theta_k-\sigma_0,\theta_k+\sigma_0]\subset (0,2\pi)$ and 
$[\theta_k-\sigma_0,\theta_k+\sigma_0]\bigcap [\theta_{k+1}-\sigma_0,\theta_{k+1}+\sigma_0]=\varnothing,
\ k=1,\dots M-1$. Next, define:
\[
 \sectr_k(\sigma) \ := \ \{re^{i\theta}: r\geq 0, \ \theta\in [\theta_k-\sigma,\theta_k+\sigma]\}, 
 \;\; \sigma\in (0,\sigma_0).
\]
There exist positive constants
\begin{itemize}
 \item $C_1\equiv C_1(U)$, which varies polynomially in $\sup_{|z|=1}|U(z)|$;
 \item $C_2\equiv C_2(U,\sigma)$, which varies polynomially in $\sup_{|z|=1}|U(z)|$ and 
 $\sigma\in (0,\sigma_0)$;
\end{itemize}
and a $2\pi$-periodic function $h\in\smoo^\infty(\RR)$ such that:
\begin{enumerate}
 \item[$a)$] $0<h(x)\leq 1 \ \forall x\in\RR$.
 \item[$b)$] $\laplc\left(U-\delta|\bcdot|^j h\circ\rg{\bcdot}\right)(z)
 \geq \delta C_1|z|^{j-2} \ \forall z\in\cplx$
 and $\forall\delta:0<\delta\leq 1$.
 \item[$c)$] $\laplc\left(U-\delta|\bcdot|^j h\circ\rg{\bcdot}\right)(z)
 \geq C_2|z|^{j-2} \ \forall z\in \cplx\setminus\left(\bigcup_{k=1}^M\sectr_k(\sigma)\right)$
 and $\forall\sigma\in (0,\sigma_0)$, independent of $\delta$.
\end{enumerate}
(Here $\rg{\bcdot}$ refers to any continuous branch of the argument.)
\end{result}

\begin{remark}
The $\delta>0$ appearing in the above lemma must not be confused with the $\delta$ appearing
in the statement \cite[Lemma~2.4]{fornaessSibony:cpfwpd89}. The latter $\delta$ is a universal
constant which is a component of the constant $C_1(U)$ in our notation. If we denote the
$\delta$ of \cite[Lemma~2.4]{fornaessSibony:cpfwpd89} by $\delta_{{\rm univ}}$, then our
$C_1(U)$ is a polynomial function of $\delta_{{\rm univ}}$ and
\[
\text{(in the notation of \cite{fornaessSibony:cpfwpd89})} \;\; \|U\| \ := \
\sup_{|z|=1}|U(z)|.
\]
\end{remark}
\smallskip

The last technical item is a Levi-form calculation. We will need this result in 
Section~\ref{S:leadBump}.

\begin{lem}\label{L:loplush} 
Let $P(z_1,z_2)$ be a plurisubharmonic, non-pluriharmonic polynomial that is homogeneous of degree $2k$,
$k\geq 2$, and assume that $P(0,\bcdot)\equiv 0$.
Write   
\[
P(z_1,z_2) \ = \ \sum_{j=\mu}^{2k}Q_j(z_1,z_2),
\]
where each $Q_j$ is the sum of all monomials of $P$ that involve powers of $z_1$ and
$\zbar_1$ having total degree $j, \ \mu\leq j\leq 2k$. Then $Q_\mu$ is plurisubharmonic.
\end{lem}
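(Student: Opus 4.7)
The plan is to isolate $Q_\mu$ as a limit of pullbacks of $P$ by a $1$-parameter family of holomorphic (anisotropic) dilations, and then invoke the standard stability of plurisubharmonicity under such operations. First I note that the hypothesis $P(0,\bcdot)\equiv 0$ forces every monomial of $P$ to carry at least one power of $z_1$ or $\zbar_1$, so $\mu\geq 1$; consequently the decomposition $P=\sum_{j=\mu}^{2k}Q_j$ starts at a strictly positive index, which is what makes the rescaling argument work.

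Concretely, for each $\lam>0$ define the holomorphic map $\Phi_\lam\colon\CC\to\CC$ by $\Phi_\lam(z_1,z_2):=(\lam z_1,z_2)$. Because every monomial in $Q_j$ has total $(z_1,\zbar_1)$-degree exactly $j$, the substitution $z_1\mapsto \lam z_1$ pulls out exactly a factor $\lam^j$ from $Q_j$ (here we use that $\lam>0$ is real, so $\overline{\lam z_1}=\lam\zbar_1$). Hence
\[
P\circ\Phi_\lam(z_1,z_2) \ = \ \sum_{j=\mu}^{2k}\lam^j\,Q_j(z_1,z_2),
\]
and therefore
\[
\lam^{-\mu}\,P\circ\Phi_\lam(z_1,z_2) \ = \ Q_\mu(z_1,z_2)+\sum_{j=\mu+1}^{2k}\lam^{\,j-\mu}Q_j(z_1,z_2).
\]
Since $P\in\psh(\CC)$ and $\Phi_\lam$ is holomorphic, $P\circ\Phi_\lam\in\psh(\CC)$; multiplication by the positive constant $\lam^{-\mu}$ preserves plurisubharmonicity, so the left-hand side above is plurisubharmonic for each $\lam>0$.

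To finish, I let $\lam\to 0^+$. The right-hand side is a polynomial in $\lam$ whose constant term (in $\lam$) is $Q_\mu$, so the family $\{\lam^{-\mu}P\circ\Phi_\lam\}_{\lam>0}$ converges locally uniformly on $\CC$ to $Q_\mu$ as $\lam\to 0^+$. Because plurisubharmonicity is preserved under decreasing limits and, more generally, under locally uniform limits, I conclude that $Q_\mu\in\psh(\CC)$. I do not anticipate any real obstacle here; the only thing to keep track of carefully is the exponent bookkeeping, namely that each $Q_j$ is genuinely homogeneous of degree $j$ under the anisotropic scaling $(z_1,z_2)\mapsto(\lam z_1,z_2)$ (and not under the isotropic scaling, which would only recover the full $P$ as leading term). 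This is immediate from the way $Q_j$ was defined, so the argument reduces to the calculation above.
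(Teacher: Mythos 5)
Your proof is correct. It is the same underlying idea as the paper's proof --- both rest on the observation that the anisotropic dilation $z_1\mapsto\lambda z_1$ picks out $Q_\mu$ as the lowest-order piece of $P$ --- but you package it more cleanly. The paper runs a contradiction argument at the level of the Levi form: assuming $\levi{Q_\mu}(\zt_0w_0,w_0;V)=-C<0$, it rescales the base point by $\zt_0\mapsto r\zt_0$ and the vector by $V\mapsto W_r=(rV_1,V_2)$, shows $\levi{Q_\mu}$ picks up a factor $r^\mu$ while the terms from $Q_l$, $l>\mu$, are $O(r^l)$, and concludes $\levi{P}(r\zt_0w_0,w_0;W_r)<0$ for small $r$, contradicting $P\in\psh$. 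Your version is the function-level counterpart of this: $\lam^{-\mu}P\circ\Phi_\lam$ is psh for each $\lam>0$, it converges locally uniformly to $Q_\mu$ as $\lam\to 0^+$, and a locally uniform limit of psh functions is psh. This avoids the polar-coordinate and Levi-form bookkeeping and the contradiction, at the cost of invoking the (standard) stability of $\psh$ under locally uniform limits. Both are valid; yours is the more economical write-up.

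One very minor remark: you do not actually need $\mu\geq 1$ for the dilation argument to go through --- the limit $\lam\to 0^+$ makes sense and isolates $Q_\mu$ regardless --- so the opening observation about $P(0,\bcdot)\equiv 0$ forcing $\mu\geq 1$, while true, is not load-bearing.
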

\begin{proof}
We start with a Levi-form calculation. For $\zt, w\in \cplx$, let us write
\begin{align}
 w \ &= \ |w|e^{i\phi}, \notag \\
 \zt \ &= \ |\zt|e^{i\alpha}, \;\; \text{where $\phi,\alpha\in \tor:=\RR/2\pi\zahl$.}\notag
\end{align}
With this notation, the Levi-form of $P$ at the points $(\zt w,w)$ can be written as
\begin{multline}\label{E:leviform}
\levi{P}(\zt w,w;v) = \ |w|^{2(k-1)}|\zt|^{\mu-2}\times(v_1 \quad \zt v_2)
                                \begin{pmatrix}
                                \ T_{11}(\phi,\alpha) & T_{12}(\phi,\alpha) \ \\
                                {} & {} \\
                                \ \overline{T_{12}(\phi,\alpha)} & T_{22}(\phi,\alpha) \
                                \end{pmatrix} \begin{pmatrix}
                                                \ \overline{v_1} \ \\ 
                                                {} \\
                                                \ \overline{\zt v_2} \
                                                \end{pmatrix} \\
	 + \ |w|^{2(k-1)}O( \ |\zt|^{\mu-1}|v_1|^2, |\zt|^\mu|v_1v_2|,|\zt|^{\mu+1}|v_2|^2) \ \geq \ 0,
\end{multline}
where $T_{11}$, $T_{12}$ and $T_{22}$ are trigonometric polynomials obtained when
$\levi{Q_\mu}(\zt w,w;v)$ is written out relative to the polar coordinates defined above.
We first note that as $P\in \psh(\CC)$, whence $\mu\geq 2$, 
$\levi{Q_\mu}((0,\bcdot); \ \bcdot)\geq 0$. Let us assume that $Q_\mu\notin \psh(\CC)$.
Then, there exist $\zt_0\neq 0$, $w_0\neq 0$ and a vector $V=(V_1,V_2)\in\CC$ such that
\[
\levi{Q_\mu}(\zt_0w_0,w_0;V) \ = \ -C \ < \ 0.
\]
Now, for each $r>0$, define $W_r:=(rV_1,V_2)$. From the expression for the Levi-form of $Q_\mu$
in \eqref{E:leviform} above, it clear that
\begin{equation}\label{E:homoed}
\levi{Q_\mu}(r\zt_0 w_0,w_0;W_r) \ = \ r^\mu\levi{Q_\mu}(\zt_0 w_0,w_0;V) \ = \ -Cr^\mu.
\end{equation}
Notice that by the definition of $Q_l$,
\[
\Mixdrv{j}{k}Q_l(r\zt_0w_0,w_0)W_{r,j}\overline{W_{r,k}} \ = \ O(r^{l}) \quad \forall l\geq \mu+1,
                \ j,k=1,2.
\]
Combining this fact with \eqref{E:homoed}, we see that there exists a positive constant
$\delta\ll 1$ such that
\[
\levi{P}(r\zt_0 w_0,w_0;W_r) \ = \ -Cr^\mu + O(r^{\mu+1}) \ < \ 0 \quad
\forall r\in(0,\delta).
\]
But this contradicts the plurisubharmonicity of $P$. Hence our earlier assumption must
be false, and $Q_\mu$ is plurisubharmonic.
\end{proof}
\smallskip

\section{Homogeneous polynomials: bumping around $\exepc(P)$}\label{S:leadBump}
 
This section is devoted to making precise the ideas presented in Step~1 of the outline given in
Section~\ref{S:results}. In order to state the main result of this section cleanly, we need
the following notation: for any $\xi\in\cplx$, $\scone{\xi}{\eps}$ will denote the open cone
\[
\scone{\xi}{\eps} \ := \ \{(z_1,z_2)\in\CC:|z_1-\xi z_2|<\eps|z_2|\}.
\]
Note that $\scone{\xi}{\eps}$ is a conical neighbourhood of the punctured complex line
$\{(z_1=\xi z_2,z_2):z_2\in\cplx\setminus\{0\}\}$. As stated in Section~\ref{S:results},
the initial ideas in the proof of the proposition below are similar to those in the proof
of \cite[Proposition~2]{bharaliStensones:psb09}. The departure from \cite{bharaliStensones:psb09} 
here is represented by the estimate in Part~$(a)$ of Proposition~\ref{P:leadBump} below.
This estimate is a more precise statement than Part~$(a)$ of \cite[Proposition~2]{bharaliStensones:psb09}.
Establishing this requires a more delicate construction, which relies, in part, on the following result:
\smallskip

\begin{result}[Theorem~3, \cite{bharaliStensones:psb09}]\label{R:2homo}
Let $Q(z_1,z_2)$ be a plurisubharmonic, non-harmonic polynomial
that is homogeneous of degree $2p$ in $z_1$ and $2q$ in $z_2$. Then, $Q$ is of the form
\[
Q(z_1,z_2) \ = \ U(z_1^d z_2^D),
\]
where $d,D\in\zahl_+$ and $U$ is a homogeneous, subharmonic, non-harmonic polynomial.
\end{result}
\smallskip

A comment about the hypothesis imposed on $P$ in the result below: the $P$ below is the
prototype for the polynomial $P$ described in Definition~\ref{D:CatNormForm} under the following
assumptions:
\begin{itemize}
 \item The pair $(\OM,\zt)$ is such that the Catlin multitype of $\bdy\OM$ at $\zt$ is 
 $(2k,2k)$.
 \item $\OM$ is almost $h$-extendible at $\zt$ with $\exepc(P)\neq \varnothing$.
\end{itemize}
These are encoded in the hypothesis about $\left. P\right|_{\scone{0}{\eps}\setminus L}$.
In this prototype, $L$ must be viewed as a complex line belonging to $\exepc(P)$.

\begin{prop}\label{P:leadBump} Let $P(z_1,z_2)$ be a plurisubharmonic polynomial that is
homogeneous of degree $2k$, $k\geq 2$, contains no pluriharmonic terms, and such that
\[
 P(z_1,z_2) \ = \ \sum_{j=\mu}^{2k}Q_j(z_1,z_2), \;\; \mu>0,
\]
where each $Q_j$ is the sum of all monomials of $P$ that involve powers of $z_1$ and
$\zbar_1$ having total degree $j$. Write $L:=\{(z_1,z_2):z_1=0\}$, and assume that
there exists an $\eps>0$ such that $P$ is strictly plurisubharmonic in the cone 
$\left(\scone{0}{\eps}\setminus L\right)$.
Then, there exist constants $c,\sigma>0$ --- which depend only on $P$ --- 
and a non-negative function $H\in\smoo^\infty(\CC\setminus\{0\})\bigcap \smoo^2(\CC)$
that is homogeneous of degree $2k$ such that:
\begin{enumerate}
 \item[$a)$] $H(z_1,z_2)>c|z_1|^\mu|z_2|^{2k-\mu}$ when $0<|z_1|<\sigma|z_2|$.
 \item[$b)$] For every $\delta:0<\delta\leq 1$, $(P-\delta H)$ is strictly
 plurisubharmonic on $\left(\scone{0}{\sigma}\setminus L\right)$. Furthermore,
 there is a constant $B\equiv B(t)>0$ that is independent of $\delta\in (0,1]$ --- and 
 depends only on $\sigma$ and $t$ --- such that, whenever $t\in (0,1)$, 
 \begin{align}
 \levi{(P-\delta H)}(z;(V_1,V_2)) \ \geq \ &B(t)\|z\|^{2(k-1)}\|V\|^2 \notag \\ 
	&\forall z: t\sigma|z_2|<|z_1|<\sigma|z_2|, \; \; \forall V\in\CC,\notag
 \end{align}
 for every $\delta:0<\delta\leq 1$.
\end{enumerate}
\end{prop}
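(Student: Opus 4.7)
The plan is to reduce the bumping of $P$ in $\CC$ to a one-variable bumping problem via the substitution $w=z_1^d z_2^D$, and then invoke the Fornaess--Sibony-style result in one variable (Result~\ref{R:subhBump}). Lemma~\ref{L:loplush} gives $Q_\mu\in\psh(\CC)$; since $P$ has no pluriharmonic terms, neither does $Q_\mu$, and $Q_\mu\not\equiv 0$ by the definition of $\mu$, so $Q_\mu$ is non-pluriharmonic (hence non-harmonic). A short case analysis using plurisubharmonicity of $Q_\mu$ rules out $\mu\in\{1,\,2k-1,\,2k\}$: in each of these cases $Q_\mu$ would be forced to be pluriharmonic, or the strict plurisubharmonicity of $P$ on $\scone{0}{\eps}\setminus L$ would fail. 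This gives $\mu,\,2k-\mu\geq 2$. Result~\ref{R:2homo} then yields integers $d,D\in\zahl_+$ and a subharmonic, non-harmonic, homogeneous polynomial $U$ on $\cplx$ of degree $j:=\mu/d=(2k-\mu)/D$ such that $Q_\mu=U\circ\Phi$, where $\Phi(z_1,z_2):=z_1^d z_2^D$.

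I would then apply Result~\ref{R:subhBump} to $U$ (after, if necessary, a rotation $w\mapsto e^{i\phi}w$ to place the zeros of $\laplc U$ on the unit circle into $(0,2\pi)$), obtaining positive constants $C_1,C_2$ and a $2\pi$-periodic $h_0\in\smoo^\infty(\RR)$ with $0<h_0\leq 1$. Set
\[
 H(z_1,z_2) \ := \ \chi(z_1/z_2)\,|z_1|^{\mu}|z_2|^{2k-\mu}\,h_0\bigl(d\,\rg{z_1}+D\,\rg{z_2}\bigr),
\]
where $\chi\in\smoo^\infty(\cplx)$ is a radial cutoff, identically $1$ on $\{|\xi|\leq 2\sigma\}$ and supported in $\{|\xi|<\sigma_*\}$ for $\sigma_*>2\sigma>0$ to be fixed below. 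On $\{|z_1|\leq 2\sigma|z_2|\}$ this $H$ coincides with $|\Phi|^j\,h_0\circ\rg{\Phi}$, so $H$ is homogeneous of degree $2k$; the inequalities $\mu,\,2k-\mu\geq 2$ guarantee $H\in\smoo^\infty(\CC\setminus\{0\})\cap\smoo^2(\CC)$, and part~$(a)$ of the proposition is immediate with $c:=\min_\RR h_0>0$. For part~$(b)$, I would write $P=Q_\mu+R$ with $R:=\sum_{j>\mu}Q_j$ and, on $\{|z_1|\leq 2\sigma|z_2|\}$, use holomorphicity of $\Phi$ to get
\[
 \levi{(Q_\mu-\delta H)}(z;V) \ = \ \tfrac14\,\laplc\bigl(U-\delta|\bcdot|^j h_0\circ\rg{\bcdot}\bigr)(\Phi(z))\,\bigl|D\Phi(z)V\bigr|^2,
\]
a non-negative form with kernel $\cplx\cdot(Dz_1,-dz_2)$. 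Decomposing $V=V_\parallel+V_\perp$ along this kernel and its orthogonal complement, part~$(b)$ of Result~\ref{R:subhBump} supplies a strictly positive contribution from $V_\perp$; on $V_\parallel$, $\levi{Q_\mu}$ vanishes and the strict plurisubharmonicity of $P$ forces $\levi R(z;V_\parallel)=\levi P(z;V_\parallel)>0$. Cauchy--Schwarz against these two diagonal bounds, followed by shrinking $\sigma$ so that $\scone{0}{\sigma}\subset\scone{0}{\eps}$, absorbs the cross terms and the $V_\perp$-component of $\levi R$, delivering strict plurisubharmonicity of $P-\delta H$ on $\scone{0}{\sigma}\setminus L$ for every $\delta\in(0,1]$.

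The main obstacle will be the uniform-in-$\delta$ lower bound $B(t)$ on the annulus $\{t\sigma|z_2|<|z_1|<\sigma|z_2|\}$. Here I would pit part~$(c)$ of Result~\ref{R:subhBump} against the strict plurisubharmonicity of $P$: over the $\Phi$-preimage of $\cplx\setminus\bigcup_k\sectr_k(\sigma_1)$ the pulled-back Laplacian bound is $C_2$-positive independent of $\delta$, which together with $\levi R(z;V_\parallel)>0$ yields a uniform lower bound on $\levi{(P-\delta H)}$. On the thin $\Phi$-pulled-back neighbourhoods of the bad rays, $|\levi H|$ is controlled by homogeneity and compactness of the relevant arc of the unit sphere, while $\levi P\geq c(t,\sigma)\|z\|^{2(k-1)}I$ since the annulus is contained in $\scone{0}{\eps}\setminus L$ and bounded away from $L$. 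The delicate step is matching scales: $\delta$ can be as large as $1$ and the thin neighbourhoods cannot be shrunk arbitrarily without losing $C_2$, so $\sigma_1$ and $\sigma$ must be chosen in tandem so that $c(t,\sigma)$ beats the sup of $|\levi H|$ on these neighbourhoods while the $C_2$ from part~$(c)$ for the chosen $\sigma_1$ remains usable on their complement --- a bookkeeping that the explicit $\sigma$-dependence built into Result~\ref{R:subhBump}$(c)$ is expressly designed to facilitate.
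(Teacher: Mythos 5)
Your strategy tracks the paper's: factor $Q_\mu=U\circ\Phi$ via Result~\ref{R:2homo}, pull back the Fornaess--Sibony bumping of Result~\ref{R:subhBump} to get $H$, and estimate the Levi form by separating $V$ according to its projection onto $\ker D\Phi$. The cutoff $\chi$ is superfluous (the paper takes $H=F\circ\Phi$ globally; the only estimates needed live inside the cone) and the rotation normalising $U$ is harmless, so these are cosmetic deviations.

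The genuine gap sits exactly in the passage you flag as ``the delicate step.'' Over the $\Phi$-preimages of the bad sectors you propose to beat $\sup|\levi{H}|$ by the lower bound $c(t,\sigma)\|z\|^{2(k-1)}\|V\|^2$ on $\levi{P}$ coming from the annulus. This cannot be arranged by tuning $\sigma_1$ and $\sigma$. Shrinking $\sigma_1$ does not shrink $\sup|\levi{H}|$ on those preimages: each one still contains the central ray $\rg{\Phi(z)}=\theta_k$, where $\levi{H}$ is continuous and generically of full size $\approx|z_1|^{\mu-2}|z_2|^{2k-\mu}$ in the $V_\perp$-direction. Meanwhile $c(t,\sigma)$ is governed by the smallest eigenvalue of $\levi{P}$, which lies in the $V_\parallel$-direction and degenerates like $|z_1/z_2|^{\eta}$ with $\eta\geq\mu+1$ (this is exactly what the paper's Step~1 quantifies). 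So $c(t,\sigma)$ is \emph{dominated by}, not dominating, $\sup|\levi{H}|$ for $\sigma$ small. What actually closes the argument --- and what the constants $A_k$ in the paper's Step~2, feeding into \eqref{E:headPos}, are designed to capture --- is a \emph{sign} fact about the Fornaess--Sibony bumping, not a magnitude bound: in a $\delta$-independent angular neighbourhood of each zero of $\laplc U$ one has $\laplc(U-\delta F)\geq\laplc U$, i.e.\ $F_{w\bar w}\leq 0$ there, so that $\levi{H}=F_{w\bar w}(\Phi(z))\,|D\Phi(z)\cdot V|^2\leq 0$ on the bad-sector preimages. Hence $\levi{(P-\delta H)}\geq\levi{P}$ there for every $\delta\in(0,1]$, and the uniform positivity of $\levi{P}$ on the compact annulus-cross-sphere finishes. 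Passing from $\levi{H}$ to $|\levi{H}|$ is precisely where this information is lost. A second, smaller issue: the observation ``strict plurisubharmonicity of $P$ forces $\levi R(z;V_\parallel)>0$'' is too soft to drive the Cauchy--Schwarz absorption of cross-terms; you need a quantitative lower bound of the form $\tail(\zeta,\phi;(-b\zeta,a))\gtrsim|\zeta|^\eta$ on a full conical neighbourhood $\vcone{\beta}(\zeta)$ of the kernel, and extracting it (via real-analyticity of the matrix $\mat$ plus a compactness argument ruling out accumulation of $\mathcal{S}_1$ at $\zeta=0$) is the content of the paper's Step~1, not a formality.
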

\begin{proof}
From Result~\ref{R:2homo}, we deduce that there exist a homogeneous, subharmonic, non-harmonic
polynomial $U$; and $a, b\in \zahl_+$ such that $Q_\mu(z_1,z_2)=U(z_1^a z_2^b)$. Then, adopting
the notation used in the proof of Lemma~\ref{L:loplush} above, we see that the Levi-form of $P$ at the 
points $(\zt w,w)$ has the form:
\begin{align}
 &\levi{P}(\zt w,w;V) \ = \ U_{\xi\xibar}(\zt^a w^{a+b})J(\zt,w)|aV_1+b\zt V_2|^2 \notag \\
 &\; \; + \ |w|^{2(k-1)}\sum_{j=\mu+1}^{2k}|\zt|^{j-2}\times(V_1 \quad \zt V_2)
                                \begin{pmatrix}
                                \ T_{11}^{j}(\phi,\alpha) & T_{12}^{j}(\phi,\alpha) \ \\
                                {} & {} \\
                                \ \overline{T_{12}^{j}(\phi,\alpha)} & T_{22}^{j}(\phi,\alpha) \
                                \end{pmatrix} \begin{pmatrix}
						\ \overline{V_1} \ \\
						{} \\
						\ \overline{\zt V_2} \
						\end{pmatrix} \ 
						\notag \\
 &\; \; \equiv \ U_{\xi\xibar}(\zt^a w^{a+b})J(\zt,w)|aV_1+b\zt V_2|^2
		+ |w|^{2(k-1)}\tail(\zt,\phi;V), \label{E:leviformP}
\end{align}
where $J(\zt,w):=|\zt|^{2(a-1)}|w|^{2(a+b-1)}$; $T_{11}^{j}$, $T_{12}^{j}$ and 
$T_{22}^{j}$ are trigonometric polynomials obtained
when $\levi{Q_j}(\zt w,w;V)$ is written out in polar coordinates as described in the proof of
Lemma~\ref{L:loplush}; and where $\tail:\cplx\times\tor\times\CC\lrarw \RR$. Define the
matrix-valued function $\mat:\cplx\times\tor\lrarw \cplx^{2\times 2}$ by the
relation
\[
 \tail(\zt,\phi;V) \ = \ \langle \mat(\zt,\phi)(V_1,\zt V_2),(V_1,\zt V_2)\rangle,
\]
where the inner product above is the standard Hermitian inner product on $\CC$. We wish to
construct the $H$ mentioned in the statement of this proposition by applying 
Result~\ref{R:subhBump} to the polynomial $U$. Consequently, establishing Part~$(b)$ of this 
proposition would require some estimates on the quantity $\tail$. We shall begin with this 
task and  divide our proof into three steps.
\smallskip
   
\noindent{{\bf Step~1.} {\em Positivity estimates on $\tail$ when $V$ lies in a 
conical neighbourhood of\linebreak
$\{V\in \CC:aV_1+b\zt V_2=0\}$}}

\noindent{Consider the set
\[
 \mathcal{S}_1 \ := \ \left\{(\zt,\phi)\in \cplx\times\tor: {\rm Ker}(\mat(\zt,\phi))\supseteq 
			\{(V_1,\zt V_2)\in \CC:aV_1+b\zt V_2=0\}\right\},
\]
which is clearly a real-analytic subvariety of $\cplx\times\tor$. Assume that
$\mathcal{S}_1\bigcap(D(0;r)^*\times\tor)\neq \varnothing \ \forall r>0$. Here, and later in
this proof, $D(0;r)^*$ shall denote the punctured disc $D(0;r)\setminus\{0\}\subset \cplx$. Then,
there exists a sequence $\{(\zt_n,\phi_n)\}_{n\in \nat}\subset \cplx\times\tor$ such that
$\zt_n\lrarw 0$ as $n\to +\infty$ and
\[
 \mat(\zt_n,\phi_n) \begin{pmatrix}
			\ -b \ \\
			{} \\
			\ a \
			\end{pmatrix} \ = \ 0 \; \; \forall n\in \nat.
\]
This implies, in view of \eqref{E:leviformP}, that 
\[
 \levi{P}(\zt_n e^{i\phi_n}, e^{i\phi_n}; (-b\zt_n,a)) = 0 \; \; \forall n\in \nat.
\]
But this means that $P$ is {\em not} strictly plurisubharmonic on the set of
points\linebreak
$\{(\zt_n e^{i\phi_n}, e^{i\phi_n}):n\in \nat\}$, which has non-empty intersection 
with $\left(\scone{0}{\eps}\setminus L\right)$. This contradicts our hypothesis, whence our
assumption on $\mathcal{S}_1$ is false. Therefore, $\exists r_1\in (0,\eps)$ such that
$\mathcal{S}^{r_1}_1:=\mathcal{S}_1\bigcap(D(0;r_1)^*\times\tor)=\varnothing$.}
\smallskip

Let us define the following auxiliary objects:
\begin{align}
 \widetilde{\tail}(\zt,\phi;V) \ &:= \ |\zt|^2\langle \mat(\zt,\phi)V,V\rangle, \notag \\
 \mathfrak{u} \ &:= (-b,a)/\|(-b,a)\|. \notag
\end{align}
From the following facts:
\begin{itemize}
 \item $P$ is strictly plurisubharmonic on $\left(\scone{0}{\eps}\setminus L\right)$;
 \item $\mathcal{S}^{r_1}_1=\varnothing$, with $r_1<\eps$;
 \item $\levi{Q_\mu}((\zt\bcdot,\bcdot);(-b\zt,a))\equiv 0$;
\end{itemize}
we conclude that $\tail(\zt,\phi;(-b\zt,a))>0$ on $D(0;r_1)^*\times\tor$. Owing to
the real-analyticity of $\tail$, we can find an exponent $\eta : \mu+1\leq \eta\leq 2k$,
and a constant $\sigma_1>0$ such that:
\begin{itemize}
 \item[$(i)$] $\tail(\zt,\phi; (-b\zt,a)) \geq \sigma_1|\zt|^{\eta} \; \forall 
	(\zt,\phi)\in D(0;3r_1/4)\times\tor$.
 \item[$(ii)$] For each $\phi\in\tor$
 \[
  \liminf_{\zt\to 0}\frac{\tail(\zt,\phi; (-b\zt,a))}{|\zt|^{\eta}}>\sigma_1/2.
 \]
\end{itemize}
In view of the identity
\[
 \frac{\tail(\zt,\phi; (-b\zt,a))}{a^2+b^2} \ = \ \widetilde{\tail}(\zt,\phi;\mathfrak{u}),
\]
the bounds in $(i)$ and $(ii)$ above tell us that there is a 
$D(0;3r_1/4)\times\tor\times S^3$-open neighbourhood, say $\mathcal{U}$,
of $D(0;3r_1/4)\times\tor\times\{\mathfrak{u}e^{it}:t\in\RR\}$
such that
\[
 \widetilde{\tail}(\zt,\phi;V) \ \geq \ \frac{\sigma_1}{4(a^2+b^2)}|\zt|^{\eta}
	\; \; \forall (\zt,\phi,V)\in \mathcal{U}.
\]
Here, and later in this proof, $S^3$ shall denote the unit (Euclidean) sphere in $\CC$.
From the last inequality, it follows that there exists a small $S^3$-open neighbourhood
$\mathcal{W}$ of $\{\mathfrak{u}e^{it}:t\in\RR\}$ such that
\begin{equation}\label{E:tailPos11}
 \widetilde{\tail}(\zt,\phi;V) \ \geq \ \frac{\sigma_1}{4(a^2+b^2)}|\zt|^{\eta} \; \;
	\forall (\zt,\phi,V)\in \overline{D(0;r_1/2)}\times\tor\times\mathcal{W}.
\end{equation}
And finally, exploiting the relationship between $\widetilde{\tail}$ and $\tail$,
we infer from \eqref{E:tailPos11} that there exists a small constant $\beta>0$ --- which 
depends upon the cone determined by $\mathcal{W}$ --- such that
\begin{align}
 \text{\em If} \ |aV_1+b\zt V_2| \ &\leq \ \beta|a\zt V_2-bV_1| \ \text{\em then} \notag \\
 \tail(\zt,\phi;V) \ &\geq \ 
	\frac{\sigma_1}{4(a^2+b^2)}|\zt|^{\eta}\left(\left|\frac{V_1}{\zt}\right|^2+|V_2|^2\right),
	\; \; (\zt,\phi)\in (\overline{D(0;r_1/2)}\setminus\{0\})\times\tor.\notag
\end{align}
From this we can infer that, shrinking $\beta>0$ if necessary , if we define the closed cones
\[
 \vcone{\beta}(\zt) \ := \ \{V\in \CC: ((b/a)-\beta)|\zt V_2|\leq |V_1|\leq ((b/a)+\beta)|\zt V_2|\}
\]
(understanding $\beta$ to be smaller than $b/a$), then there are small constants 
$c_1, R_1>0$ such that
\begin{align}
 \tail(\zt,\phi;V) \ \geq \ c_1|\zt|^{\eta-2}&(|V_1|^2+|\zt V_2|^2) \notag \\
        \forall &(\zt,\phi,V)\in 
	\bigcup_{\widetilde{\zt}\in\overline{D(0;R_1)}}
	\{\widetilde{\zt}\}\times\tor\times\vcone{\beta}(\widetilde{\zt}). 
	\label{E:tailPos12}
\end{align}
\smallskip

\noindent{{\bf Step~2.} {\em The structure of the set 
$\{(\zt,w):w\zt\neq 0 \ \text{and} \ U_{\xi\xibar}(\zt^aw^{a+b})=0\}$ and definition of 
$H$ when this set is non-empty}}

\noindent{Let $\{\tht_1,\dots,\tht_M\}$ be the set associated to $\laplc U=4U_{\xi\xibar}$
as defined in Result~\ref{R:subhBump}. The arguments in this part of our proof {\em are
predicated on the assumption that $\{\tht_1,\dots,\tht_M\}\neq \varnothing$.} We will
consider what to do when $\{\tht_1,\dots,\tht_M\}= \varnothing$ in Step~3. We compute:
\begin{align}
 \mathfrak{S}_1 \ &:= \ \left\{(\zt,w)\in \CC: w\zt\neq 0 \ \text{and} \ 
					U_{\xi\xibar}(\zt^a w^{a+b})=0\right\} \notag \\
 &= \bigcup_{k=1}^M\bigcup_{r>0}\left\{(\zt,w)\in \CC: \zt^a w^{a+b}=re^{i\tht_k}\right\}.
	\label{E:radial}
\end{align}
We can apply a simple computation to \eqref{E:radial} to get:
\begin{equation}\label{E:rank0}
 \mathfrak{S}_1 \ = \ \bigcup_{\alpha\in \RR} \; \bigcup_{k=1}^M \; \bigcup_{l=1}^{a+b}
                        \left\{(re^{i\alpha},se^{i\phi}): r,s>0, \;
                                \phi=\frac{\tht_k-a\alpha+2\pi l}{a+b}\right\}.
\end{equation}
In other words, for each fixed $\zt\in \cplx\setminus\{0\}$, $\mathfrak{S}_1\bigcap(\{\zt\}\times\cplx)$
is a collection of radial segments. We now apply Result~\ref{R:subhBump} to $U$ and
let $h$ be the $2\pi$-periodic function whose existence is asserted by Result~\ref{R:subhBump}.
Let $m\in \zahl_+$ be such that $U$ is homogeneous of degree $2m$.
Define
\[
 F(re^{i\tht}) \ := \ r^{2m}h(\tht) \; \; \forall re^{i\tht}\in \cplx.
\] 
Now set
\begin{multline}
 A_k \ := \ \frac{\sup\{t>0: \laplc(U-\delta F)(e^{i\tht})>\laplc U(e^{i\tht}) \; \forall
			\tht\in (\tht_k-t,\tht_k+t)\}}{2}, \\ 
			k=1,\dots M, \;\; 0< \delta\leq 1. \notag
\end{multline}
The whole point of Part~$(b)$ of Result~\ref{R:subhBump} is that $A_k>0 \ \forall k\leq M$.
It is obvious that $A_k$ is independent of $\delta$ because any lowering of 
$\laplc(U-\delta F)(e^{i\tht})$ with respect to $\laplc U(e^{i\tht})$ is determined {\em only by
the cut-off functions used to construct $h$}. 
\smallskip

Let us now define 
\begin{equation}\label{E:HBad}
  H(z_1,z_2) \ := \ F(z_1^a z_2^b) \; \; \forall (z_1,z_2)\in \CC.
\end{equation}
Let us also set $A:=\min_{1\leq k\leq M}A_k$.
Refer to Result~\ref{R:subhBump} for a definition of the sets $\mathcal{S}_k(A)$, which
are just closed sectors in $\cplx$. If we now define the set $\omega\subset \CC$ by
\[
 \omega \ := \  \bigcup_{\alpha\in \RR} \; \bigcup_{k=1}^M \; \bigcup_{l=1}^{a+b}
		\left\{(re^{i\alpha},se^{i\phi}): r,s\geq 0, \;
			\left|\phi-\frac{\tht_k-a\alpha+2\pi l}{a+b}\right|\leq \frac{A}{a+b}\right\},
\]
then, comparing this with \eqref{E:rank0} and the definition of $A$, we infer:
\begin{equation}\label{E:sectors}
 (\zt,w)\in \omega \; \iff \ \zt^a w^{a+b}\in \mathcal{S}_k(A) \; \text{for some $k=1,\dots,M$.}
\end{equation}
Then, from the definition of $A$, we conclude that
\begin{align}
 \levi(Q_\mu-\delta H)(\zt w,w;V) \ &= 
		\ (U-\delta F)_{\xi\xibar}(\zt^a w^{a+b})J(\zt,w)|aV_1+b\zt V_2|^2 \notag \\
	&\geq \ \levi Q_\mu(\zt w,w;V) 
	\;\; \forall (\zt,w,V)\in \omega\times\CC, \label{E:headPos}
\end{align}
for each $\delta: 0< \delta\leq 1$.}
\smallskip

\noindent{{\bf Step~3.} {\em Completing the proof when the set 
$\{(\zt,w):w\zt\neq 0 \ \text{and} \ U_{\xi\xibar}(\zt^aw^{a+b})=0\}$
is non-empty}}

\noindent{In view of Part~$(b)$ of Result~\ref{R:subhBump}, the definition of $H$ and the 
estimate \eqref{E:tailPos12}, we have
\begin{multline}\label{E:leadBump1}
 \levi(P-\delta H)(\zt w,w;V) \ \geq \  
		c_1|w|^{2(k-1)}|\zt|^{\eta-2}(|V_1|^2+|\zt V_2|^2) \\
	\forall(\zt,w,V)\in \bigcup_{\widetilde{\zt}\in\overline{D(0;R_1)}}
				\{\widetilde{\zt}\}\times\tor\times\vcone{\beta}(\widetilde{\zt}),
\end{multline}
for each $\delta: 0<\delta\leq 1$. 
Next, from \eqref{E:headPos} and \eqref{E:leviformP}, we conclude that
\begin{equation}\label{E:leadBump2}
 \levi(P-\delta H)(\zt w,w;V) \ \geq \levi{P}(\zt w,w;V) \ \geq \ 0 \; \;
					\forall (\zt,w,V)\in \omega\times\CC,
\end{equation}
for each $\delta: 0<\delta\leq 1$.}
\smallskip

Let us stay with the case when $(\laplc U)^{-1}\{0\}\varsupsetneq \{0\}$ for the moment
and complete our argument for this case. It now remains to account for the positivity of the Levi-form 
of $(P-\delta H)$ along
vectors that remain unaccounted for by the inequality \eqref{E:leadBump1}. We will
have to treat this under separate two cases. In this part of the proof, we shall use the
notation $J_l(\zt,w)$ to denote the product $|\zt|^{\mu-l}|w|^{2(k-1)}, \ l=0,\dots,\mu$.
\medskip

\noindent{{\em Case $(i)$} {\em $V\in \CC: \ |V_1|\leq ((b/a)-\beta)|\zt V_2|$ for some $\zt\in \cplx$}}

\noindent{The above condition implies
\begin{align}
 |aV_1+b\zt V_2| \ &\geq \ b|\zt V_2|-a|V_1| \ \geq \ a\beta|\zt V_2|, \notag \\
 |V_1|^2+|\zt V_2|^2 \ &\leq \left((b/a)^2+1\right)|\zt V_2|^2. \notag
\end{align}
It sufficies to study $\levi(P-\delta H)$ for 
\begin{align}
 (\zt,w,V) \ \in \ & \left\{(\zt,w,V)\in \omega^{{\sf C}}\times\CC: |\zt|<1/2, \ 
				|V_1|\leq ((b/a)-\beta)|\zt V_2|\right\} \notag \\ 
	&:= \ W_1(s=1/2). \notag
\end{align}
We appeal to the observation \eqref{E:sectors}, which, in conjunction with Part~(c) of
Result~\ref{R:subhBump}, tells us that there exist constants $C,M>0$ such that (exploiting
the two inequalities above) we may estimate:
\begin{align}
 \levi(P-\delta H)(\zt w,w;V) \ &\geq \ C(a\beta)^2J_0(\zt,w)|V_2|^2
					-MJ_1(\zt,w)(|V_1|^2+|\zt V_2|^2) \notag \\
		&\geq 
	\ J_0(\zt,w)\left[C(a\beta)^2-M|\zt|\left((b/a)^2+1\right)\right]|V_2|^2. \notag \\
		& \qquad \forall (\zt,w,V) \ \in \ W_1(1/2), \notag
\end{align}
for each $\delta: 0<\delta\leq 1$. Thus, there exists a positive constant $R_2\ll 1$ such that 
\begin{align}
 \levi(P-\delta H)(\zt w,w;V) \ &\geq \frac{C(a\beta)^2}{2}J_0(\zt,w)|V_2|^2 \notag \\
	&\geq \ c_2J_2(\zt,w)(|V_1|^2+|\zt V_2|^2) \label{E:leadBump3} \\
	& \qquad \forall (\zt,w,V) \ \in \ W_1(R_2), \notag
\end{align}
for each $\delta: 0<\delta\leq 1$.}
\smallskip

\noindent{{\em Case $(ii)$} {\em $V\in \CC: \ |V_1|\geq ((b/a)+\beta)|\zt V_2|$ for some $\zt\in \cplx$}}

\noindent{The inequalities analogous to the ones used in Case~$(i)$ are:   
\begin{align}  
 |aV_1+b\zt V_2| \ &\geq \ a|V_1|-b|\zt V_2| \ \geq \   
                a\beta\left((b/a)+\beta\right)^{-1}|V_1|, \notag \\
 |V_1|^2+|\zt V_2|^2 \ &\leq \left[\left((b/a)+\beta\right)^{-2}+1\right]|V_1|^2. \notag
\end{align}
It is now evident that the previous argument goes through {\em mutatis mutandis} to yield
constants $c_3,R_3>0$ such that
\begin{align}\label{E:leadBump5}
 \levi(P-\delta H)(\zt w,w;V) \
        &\geq \ c_3J_2(\zt,w)(|V_1|^2+|\zt V_2|^2) \\
        & \qquad \forall (\zt,w,V) \ \in \ W_2(R_3), \notag
\end{align}
for each $\delta: 0<\delta\leq 1$, where
\[
 W_2(s) \ := \ \left\{(\zt,w,V)\in \omega^{{\sf C}}\times\CC: |\zt|<s, \
                        |V_1|\geq ((b/a)+\beta)|\zt V_2|\right\}.
\]}

From the definition of $H$ given by \eqref{E:HBad} (recall that $(\laplc U)^{-1}\{0\}\varsupsetneq \{0\}$),
Part~$(a)$ is evident. Next; if we define
\[
 \sigma \ := \ \min\{\eps, R_1, R_2, R_3\}
\]
and make the following substitutions
\[
 z_1 \ = \ \zt w, \qquad\quad z_2 \ = \ w,
\]
then, the statements \eqref{E:leadBump1}, \eqref{E:leadBump2}, \eqref{E:leadBump3}
and \eqref{E:leadBump5} clearly establish Part~$(b)$
\smallskip

\noindent{{\bf Step~$\boldsymbol{3^\prime}$.} {\em Completing the proof when the 
set $\{(\zt,w):w\zt\neq 0 \ \text{and} \ U_{\xi\xibar}(\zt^aw^{a+b})=0\}$ is empty}}

\noindent{Very similar ideas will work when $(\laplc U)^{-1}\{0\}=\{0\}$. In this situation, 
Step~2 is irrelevant. Instead, we begin by defining
$\gamma:=\inf_{\tht\in[0,2\pi]}U_{\xi\xibar}(e^{i\tht})$. Note that $\gamma>0$.
In this case, we define (recall that $U$ is homogeneous of degree $2m$)
\begin{equation}\label{E:HGood}
  H(z_1,z_2) \ := \ \frac{\gamma}{2m^2}|z_1^a z_2^b|^{2m} \; \; \forall (z_1,z_2)\in \CC.
\end{equation}
In this situation, we can repeat the entire argument given in Step~3 --- relying, once again, on the 
two inequalities that follow the headings ``Case~$(i)$'' and ``Case~$(ii)$'' --- making the
following replacements:
\begin{itemize}
 \item Replace the definition of $W_1(s)$ given in Case~$(i)$ by
 \[
   W_1(s) \ := \ \left\{(\zt,w,V)\in \CC\times\CC: |\zt|<s, \ 
			|V_1|\leq ((b/a)-\beta)|\zt V_2|\right\};
 \]
 \item  Replace the definition of $W_2(s)$ given in Case~$(ii)$ by
 \[
  W_2(s) \ := \ \left\{(\zt,w,V)\in \CC\times\CC: |\zt|<s, \
                        |V_1|\geq ((b/a)+\beta)|\zt V_2|\right\};
 \]
 \item Replace the constant $C$, wherever it occurs in Step~3, by the constant $\gamma/2$.
\end{itemize}
It is easy to see that this exercise leads us to the following conclusion:
\begin{multline}
 \text{\em When $(\laplc U)^{-1}\{0\}=\{0\}$, analogues of the estimates \eqref{E:leadBump3}
 and \eqref{E:leadBump5}} \\
 \text{\em are achieved with $H$ as
 redefined in \eqref{E:HGood} $($with the newly-defined $W_j(s)$, $j=1,2$$)$,}
\end{multline}
from which the desired result follows for the case $(\laplc U)^{-1}\{0\}=\{0\}$.}
\end{proof}

\section{The key proposition}\label{S:homoBump}

We are now in a position to prove the key technical proposition of this work. We ought
to indicate that the work in this section represents a continuation of the work done in
\cite{bharaliStensones:psb09}. While the philosophy of the proof of Proposition~\ref{P:homoBump}
below is not very different from the proof of \cite[Theorem~1]{bharaliStensones:psb09}, the
{\em estimates needed for Theorem~\ref{T:main} affect the details of the proof below}. For
instance: the proof of Proposition~\ref{P:homoBump} would not work without 
Proposition~\ref{P:leadBump}. The similarity with \cite[Theorem~1]{bharaliStensones:psb09} is
further visible in our use of the other crucial idea needed in our proof. This result is derived 
from \cite{Noell:pfpd93}.
\smallskip

\begin{result}[Version of Prop.~4.1 in \cite{Noell:pfpd93}]\label{R:noell}
Let $P$ be a plurisubharmonic polynomial on $\Cn$ that is homogeneous of degree $2k$. Let   
$\omega_0$ be a connected component of $\LeviDeg{P}\setminus\{0\}$ having the following
two properties:
\begin{itemize}  
\item[$a)$] There exist closed cones $\clcone_1$ and $\clcone_2$ such that
\[
\omega_0 \subset \ {\rm int}(\clcone_1) \subset \ \clcone_1\setminus\{0\}
\varsubsetneq \ {\rm int}(\clcone_2),
\]
and such that $\clcone_2\bigcap(\LeviDeg{P}\setminus\overline{\omega_0})=\varnothing$.
\item[$b)$] $\omega_0$ does not contain any complex-analytic subvarieties of positive dimension
along which $P$ is harmonic.
\end{itemize} 
Then, there exist a smooth function $H\geq 0$ that is homogeneous of degree $2k$ and constants
$C,\eps_0>0$, which depend only on $P$, such that  ${\rm int}(\clcone_2)=\{H>0\}$ and such that
for each $\eps:0<\eps\leq\eps_0$, $\levi{(P-\eps H)}(z;v)\geq C\eps\|z\|^{2(k-1)} \   
\|v\|^2 \ \forall (z,v)\in\clcone_2\times\Cn$.
\end{result}

\begin{remark}
The above result is not stated in precisely these words in
\cite[Proposition~4.1]{Noell:pfpd93}. The proof of the latter was derived from
a construction pioneered by Diederich and Fornaess in \cite{diederichFornaess:pd:bspef77}.
A careful comparison of the proof of \cite[Proposition~4.1]{Noell:pfpd93} with the
Diederich-Fornaess construction ---  keeping in mind the assumption of homogeneity in 
Result~\ref{R:noell} --- easily reveals that the assumption $(a)$ in Result~\ref{R:noell} 
is enough to obtain the above ``localised''
version of \cite[Proposition~4.1]{Noell:pfpd93}.
\end{remark}
\smallskip

\begin{prop}\label{P:homoBump} Let $\mathcal{U}$ be a neighbourhood of $0\in \CCC$
and let $\mathcal{V}$ be such that $\{0\}\times\mathcal{V}=\mathcal{U}\bigcap(\{0\}\times\CC)$.
Let $R\in \smoo^\infty(\mathcal{V})\bigcap\psh(\mathcal{V})$ and be such that the domain
$\OM_R:=\{(w,z)\in \mathcal{U}:\er{w}+R(z)<0\}$ is of finite type. Let
\[
 \sum_{n=2k}^\infty P_{n,R}(z_1,z_2)
\]
denote the Taylor expansion of $R$ around $z=0$, where $k\geq 2$, and
each $P_{n,R}$ is the sum of all monomials having total degree $n$. Set $P:=P_{2k,R}$. Assume that
$P$ has no pluriharmonic terms and has the properties stated in Definition~\ref{D:almost-h-ext}.
If $\exepc(P)\neq \varnothing$, then:
\begin{enumerate}
 \item[1)] $\exepc(P)$ is a finite collection of complex lines $L_1,\dots L_N$ passing through
 the origin.

 \item[2)] For any $x\in L_j$, set $[j,x]^\perp:=(x+L_j^\perp), \ j=1,\dots, N$, where the orthogonal
 complement is taken with respect to the standard inner product on $\CC$. Assume that for
 each $x\in L_j\setminus\{0\}$
 \begin{equation}\label{E:simp}
  Ord\left(\left. P\right|_{[j,x]^\perp}-P(x),x\right) \ \leq \
        Ord\left(\left. R\right|_{[j,x]^\perp}-R(x),x\right),
 \end{equation}
 and let $\mu_j$ denote the generic value on $L_j$ of the left-hand side of \eqref{E:simp},
 $j=1,\dots, N$. Then, there is an {\em algebraic} change of coordinate centered at $0\in \CCC$
 and a positive integer $D$, chosen suitably so as to give us $(a)$--$(d)$ below, such that if
 $(W,Z_1,Z_2)$ denotes the new coordinates, then $Z=z$; and if\linebreak 
 $\OM_R=\{(W,z): \er{W}+\rho(z)<0\}$ is the representation of $\OM_R$ relative to
 the new coordinates, then the associated $P_{n,\rho}$ contain no pluriharmonic terms
 for $n=2k,\dots,D$, and $P_{2k,R}=P_{2k,\rho}=P$. 
 Furthermore, there exist constants $C, \delta_0, r_0>0$; a non-negative function
 $H\in \smoo^\infty(\CC\setminus\{0\})\bigcap \smoo^2(\CC)$ that is homogeneous of degree
 $2k$; closed cones $\clcone^1_j$ and $\clcone^2_j$ satisfying
 \[
  \clcone^1_j\setminus\{0\} \varsubsetneq {\rm int}(\clcone^2_j), \;\; j=1,\dots,N;
 \]
 and, for each $\delta\in (0,\delta_0)$, a constant $r_\delta>0$; subharmonic functions
 $U_{j,\delta}\in \smoo^\infty(\cplx\setminus\{0\})\bigcap \smoo^2(\cplx)$ that are homogeneous
 of degree $2M_j, \ j=1,\dots, N$, and are strictly subharmonic away from $0\in \cplx$; 
 and a function $G_\delta\in \smoo^\infty(\CC)\bigcap\psh(\CC)$ such that:
 \begin{itemize}
  \item[$a)$] $G_\delta$ is strictly plurisubharmonic on $\CC\setminus\{0\}$.
  \item[$b)$] For each $j\leq N$, $G_\delta(z) = 
  (P-\delta H)(z)+U_{j,\delta}(z_{k(j)}) \;\; \forall (z_1,z_2)\in
        \clcone^1_j\bigcap \nball{2}(0;r_0)$, where $k(j)=1 \ \text{or} \ 2$ depending on $j=1,\dots, N$.
  \item[$c)$] $G_\delta(z_1,z_2)=(P-\delta H)(z_1,z_2) \;\;
        \forall (z_1,z_2)\in \left(\CC\setminus \cup_{j=1}^N\clcone^2_j\right)\bigcap \nball{2}(0;r_0)$.
  \item[$d)$] For each $j\leq N$ such that the complex line $L_j$ has the form $L_j=\{z:z_1=\xi_j z_2\}$,
  $\xi_j\in \cplx$,
  \begin{align}
   (G_\delta-\rho)(z)\leq -\delta C(|z_1-\xi_j z_2|^{\mu_j}|z_2|^{2k-\mu_j}+&|z_2|^{2M_j}) \notag \\
		&\forall (z_1,z_2)\in \clcone^1_j\bigcap \nball{2}(0;r_\delta). \notag
  \end{align}
  If $L_j=\{z:z_2=0\}$, then the above inequality holds with the positions of $z_1$ and $z_2$ swapped
  and $\xi_j=0$.
  \item[$e)$] $(G_\delta-\rho)(z)\leq -\delta C\|z\|^{2k} \;\;
        \forall (z_1,z_2)\in \left(\CC\setminus \cup_{j=1}^N\clcone^1_j\right)\bigcap 
	\nball{2}(0;r_\delta)$.
 \end{itemize}
\end{enumerate}
\end{prop}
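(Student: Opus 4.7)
Part~(1) is immediate from Result~\ref{R:harmlines}: since we are in the true-homogeneous regime, $\exepc(P)$ consists of complex lines through $0\in\CC$ along which $P$ is harmonic, and Result~\ref{R:harmlines} asserts that there are only finitely many such lines. For Part~(2), I would proceed in four stages. In the preliminary stage, I would absorb pluriharmonic terms by an algebraic change of coordinate: for each $n\in\{2k+1,\dots,D\}$ (with $D$ to be fixed in the next stage), extract the pluriharmonic part of $P_{n,R}$ and collect them into a single polynomial $h(z_1,z_2)$. Defining $W:=w+h(z_1,z_2)$ yields a representation $\OM_R=\{(W,z):\er W+\rho(z)<0\}$ for which $P_{n,\rho}$ has no pluriharmonic terms for $n=2k,\dots,D$, and $P_{2k,\rho}=P_{2k,R}=P$.

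The core construction lies in the second and third stages. Enumerate $\exepc(P)=\{L_1,\dots,L_N\}$ from Part~(1). For each $j$, let $\Phi_j$ be a unitary rotation sending $L_j$ to $\{z_1=0\}$. The hypotheses of Proposition~\ref{P:leadBump} are satisfied by $P\circ\Phi_j^{-1}$ in view of Definition~\ref{D:almost-h-ext}: the well-separatedness of $\LeviDeg{P}\setminus\bigcup L_j$ from the $L_j$'s ensures strict plurisubharmonicity of $P$ on a punctured conical neighbourhood of $L_j$, and Lemma~\ref{L:loplush} identifies the lowest-weight summand in Proposition~\ref{P:leadBump}. Invoking Proposition~\ref{P:leadBump} produces a non-negative, degree-$2k$ homogeneous $H_j\in\smoo^\infty(\CC\setminus\{0\})\cap\smoo^2(\CC)$ together with constants $c_j,\sigma_j>0$ realizing the bound $H_j\gtrsim|z_1|^{\mu_j}|z_2|^{2k-\mu_j}$ in the rotated coordinates on a closed cone $\clcone^1_j$, and with $\levi{(P-\delta H_j)}$ uniformly positive on $\clcone^2_j$ away from $L_j$, independently of $\delta\in(0,1]$. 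By finiteness of $\exepc(P)$, I would shrink the $\clcone^2_j$'s to be pairwise disjoint outside $0$ and set $H:=\sum_{j=1}^N H_j$. For the tangential bumping along each $L_j$, I would restrict $\rho$ to $L_j$: by Stage~I this restriction is free of pluriharmonic summands, and its lowest-order term is a homogeneous polynomial of some degree $2M_j$ (finite by the finite-type hypothesis on $\bdy\OM_R$), which is subharmonic on $\cplx$. Fix $D:=\max_j(2M_j)$ and re-run Stage~I if necessary. Apply Result~\ref{R:subhBump} to this polynomial to obtain $U_{j,\delta}$: subharmonic, $\smoo^\infty$ off $0$, homogeneous of degree $2M_j$, and strictly subharmonic away from $0$, for all $\delta\in(0,\delta_0]$.

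In the fourth stage, I assemble and verify. Let $\chi_j$ be a smooth, degree-$0$ homogeneous cutoff, identically $1$ on $\clcone^1_j$ and supported in $\clcone^2_j$, and define
\[
 G_\delta(z)\ :=\ (P-\delta H)(z)\ +\ \sum_{j=1}^N\chi_j(z)\,U_{j,\delta}(z_{k(j)}),
\]
with $z_{k(j)}$ interpreted in the rotated coordinates around $L_j$. Properties~(b) and~(c) are immediate from the support of $\chi_j$. Strict plurisubharmonicity off $0$ decomposes by region: on $\clcone^1_j$, Proposition~\ref{P:leadBump}(b) yields transverse positivity of $\levi{(P-\delta H)}$ while $U_{j,\delta}$ contributes the tangential positivity; outside $\bigcup_j\clcone^2_j$, Result~\ref{R:noell} applied to the closed-wedge component of $\LeviDeg{P}\setminus\bigcup_j L_j$ guaranteed by Definition~\ref{D:almost-h-ext} yields strict positivity of $\levi{(P-\delta H)}$, while elsewhere $\levi P$ is already strictly positive. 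In the annular transition $\clcone^2_j\setminus\clcone^1_j$, derivatives of $\chi_j$ generate cross-terms of orders $|z|^{2M_j-1}$ and $|z|^{2M_j}$; since $2M_j>2k$, these are of strictly higher order than the $\delta$-uniform positive lower bound $\levi{(P-\delta H)}\gtrsim|z|^{2k-2}$ from Proposition~\ref{P:leadBump}(b) (with parameter $t\in(0,1)$ sufficiently small), and so are absorbed after shrinking $r_0$ and $\delta_0$. Estimates~(d) and~(e) follow by Taylor-expanding $\rho-G_\delta$ on each region and invoking the explicit lower bounds on $\delta H$ and $U_{j,\delta}$: the removal of pluriharmonic terms of $\rho$ up to degree $D$ in Stage~I forces the Taylor remainder to be of strictly higher order than the bumping terms. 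The principal obstacle is the Levi-form calculation in the transition annulus, where coordinating the tangential cross-terms against the transverse positivity of $(P-\delta H)$ requires both the precise decay lower bound of Proposition~\ref{P:leadBump}(a) and the $\delta$-independent positivity of Proposition~\ref{P:leadBump}(b); this is precisely where the sharpening of Proposition~\ref{P:leadBump} over \cite[Proposition~2]{bharaliStensones:psb09} is indispensable.
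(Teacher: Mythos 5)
Your overall plan is the same as the paper's — Part~(1) via Result~\ref{R:harmlines}; an algebraic coordinate change to kill pluriharmonic terms; Proposition~\ref{P:leadBump} to get transverse bumping functions $H_j$ near each $L_j$; the subharmonic lowest-order term $u_j$ of $\rho|_{L_j}$ bumped via Result~\ref{R:subhBump}; Result~\ref{R:noell} for the closed-cone part of $\LeviDeg{P}$; and a cutoff patching to form $G_\delta$. But the specific definition of $H$ as $\sum_{j=1}^N H_j$ is a genuine error, and it compromises the argument in at least two ways.

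First, the $H_j$'s produced by Proposition~\ref{P:leadBump} are \emph{not} supported in conical neighbourhoods of the $L_j$'s: each is of the form $F((z_1-\xi_j z_2)^a z_2^b)$ with $F>0$ away from the axes, hence each $H_j$ is comparable to $\|z\|^{2k}$ across all of $\CC$. Consequently, on $\clcone^1_l$ with $l\neq j$, the term $-\delta H_j$ contributes a Levi-form of magnitude $\sim\delta\|z\|^{2k-2}$, which can \emph{overwhelm} the only remaining source of positivity close to $L_l$ — the tangential term $U_{l,\delta}$, whose Levi-form decays like $|z_{k(l)}|^{2M_l-2}$ with $2M_l>2k$. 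So $G_\delta$ as you define it need not be plurisubharmonic near the lines $L_l$. The paper's remedy is to pre-multiply each $H_j$ by a $0$-homogeneous cutoff $\Psi_j$ supported in $\smcone{\xi_j}{2\alpha}$ \emph{before} summing, so that on $\clcone^1_l$ only $H_l$ (and no foreign $H_j$) is active.

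Second, with $H:=\sum_j H_j$ (even cut off), the function $P-\delta H$ equals $P$ outside the cones $\clcone^2_j$, and $P$ is \emph{not} strictly plurisubharmonic there because it degenerates along the closed wedge $\clcone$ of Definition~\ref{D:almost-h-ext}. You gesture at Result~\ref{R:noell} to supply strict positivity in exactly this region, but the Noell function $H_0$ never appears in your formula for $H$ or $G_\delta$, so the claim does not follow from what you wrote. The paper sets $\widetilde{H}:=H_0+\sum_j\Psi_j H_j$ and then performs an additional small perturbation — step (A) of its proof — to force $H^{-1}\{0\}=\bigcup_j L_j$ without destroying the strict plurisubharmonicity of the resulting $G_\delta$. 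That perturbation is also missing from your sketch, and without it the regularity and positivity claims about $H$ (and the eventual $\mathcal{H}_0$ of Theorem~\ref{T:main}) are unsubstantiated.

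Finally, in verifying (d) you should invoke the condition \eqref{E:simp} explicitly: it is what guarantees that, after the coordinate change, the intermediate pieces $P_n$, $2k<n<2M_j$, contribute powers of $|z_1-\xi_j z_2|$ of order at least $\mu_j$ on $\clcone^1_j$ (cf.~\eqref{E:junkEst1}), so that they can be absorbed into the $-\delta c_1|z_1-\xi_j z_2|^{\mu_j}|z_2|^{2k-\mu_j}$ term supplied by Proposition~\ref{P:leadBump}(a). Saying ``the removal of pluriharmonic terms forces the Taylor remainder to be of strictly higher order'' misses this point, since those intermediate $P_n$ need not have higher \emph{weighted} order unless \eqref{E:simp} is brought to bear.
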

\begin{proof}
Note that Part~(1) follows simply from Proposition~\ref{R:harmlines}.
Let $\clcone$ be the closed cone with the properties given in Definition~\ref{D:almost-h-ext}
whose existence is guaranteed by hypothesis. By assumption, 
we can find a slightly larger cone $\clBcone$ such that
\[
\LeviDeg{P}\setminus(\bigcup_{j=1}^N L_j) \subset \ {\rm int}(\clcone)
\subset \ \clcone\setminus\{0\} \varsubsetneq \ {\rm int}(\clBcone),
\]
and such that $\clBcone\bigcap(\bigcup_{j=1}^N L_j)=\{0\}$. 
\smallskip

\noindent{{\bf Step~1.} {\em Constructing $G_\delta$}}

\noindent{In view of the assumption that $P$ has no pluriharmonic terms,
$\left. P\right|_{L_j}\equiv 0$. {\em{\bf{\em Please note:}} in the interests of brevity,
our arguments below will be framed for complex lines in $\exepc(P)$ of the form}
\[
 L_j \ = \ \{z:z_1=\xi_j z_2\}, \; \; \xi_j\in \cplx,
\]
{\em only. If $\exepc(P)\ni \{z:z_2=0\}=:L_{j^0}$, then a separate argument will be provided
for a statement relating to $L_{j^0}$ only if that statement does not follow {\em mutatis
mutandis} from the arguments given.} In particular: for the purposes of Step~1, we may
assume that $L_j=\{z:z_1=\xi_j z_2\}, \ j=1,\dots,N$.
The hypotheses of Proposition~\ref{P:leadBump} are satisfied by
\[
 \jayP(z_1,z_2) \ := \ P(z_1+\xi_j z_2,z_2).
\]
Hence, we can find:
\begin{itemize}
 \item a constant $c_1>0$ that depends only on $P$;
 \item constants $\sigma_j>0, \ j=1,\dots,N$, that depend only on $P$ and $j$;
 \item functions $H_j\in\smoo^\infty(\CC\setminus\{0\})\bigcap\smoo^2(\CC)$
 that are homogeneous of degree $2k$; and
 \item a positive decreasing function $B:(0,1)\lrarw \RR$;
\end{itemize}
such that
\begin{align}
 \levi{(P-\delta H_j)}(z;(V_1,V_2)) \ \geq \ &B(t)\|z\|^{2(k-1)}\|V\|^2 \notag \\
        &\forall z: t\sigma_j|z_2|<|z_1-\xi_jz_2|<\sigma_j|z_2|, \; \; \forall V\in\CC,
	\label{E:leviBS} \\
 H_j(z_1,z_2) \ > \ &c_1|z_1-\xi_jz_2|^{\mu_j}|z_2|^{2k-\mu_j} \;\;
 	\forall z\in \scone{\xi_j}{\sigma_j}\setminus L_j, \label{E:sizej}
\end{align}
for $t\in (0,1)$, $j=1,\dots,N$, and for $\delta\in (0,1]$. Recall that $\mu_j\in \zahl_+$ are
as determined by the condition \eqref{E:simp}.}
\smallskip

Note, furthermore, that, by the finite-type assumption on $\OM_R$, if we define
\[
 M_j \ := \ \frac{\inf\{n\in \zahl_+:\left. P_{n,R}\right|_{L_j} \ 
 \text{\em is non-harmonic on $L_j$}\}}{2},
\]
then $M_j<\infty \ \forall j=1,\dots,N$. Let us now write $D:=\max_{j\leq N}2M_j$. 
It is a well-known trick that we can make a global holomorphic change of coordinates and 
work with new coordinates $(W,Z_1,Z_2)$ having the form
\begin{align}
 W \ &= \ w-(\text{\em a holomorphic polynomial in $z_1$ and $z_2$ of degree\,$\leq D$}), \notag \\
 Z \ &= \ z, \label{E:aboutAss}
\end{align}
such that if the representation of $\OM_R$ with respect to these coordinates is
\[
 \OM_R = \{(W,Z): \er{W}+\rho(Z)<0\},
\]
then the Taylor expansion of $\rho$ around $Z=0$ contains no pluriharmonic
terms of degree\,$\leq D$, and $P_{2k,R}=P_{2k,\rho}$. To clarify: the holomorphic polynomial 
that is a part of the definition of $(W,Z)$ is the sum of all monomials in $z_1$ and $z_2$ whose real 
and imaginary parts are pluriharmonic terms in $P_{n,R}, \ n=2k+1,\dots,D$. Let us call this
polynomial $\Theta$. Now, owing to the property \eqref{E:aboutAss} of $(W,Z_1,Z_2)$:
\begin{itemize}
 \item The condition \eqref{E:simp} remains unchanged when $R$ is replaced by $\rho$. To 
 understand this, first note that the left-hand side of \eqref{E:simp} remains unchanged.
 Now, suppose $L_j={\rm span}_{\cplx}\{(B_1,B_2)\}$. If $\Odr_{x,j}\in[1,\infty]$ denotes
the degree of the first non-zero monomial in $\tau$ and $\overline{\tau}$ of the Taylor
expansion in $(\tau,\overline{\tau})$ (around $\tau=0$) of 
$(R(x+\tau(B_2,-B_1))-R(x))$, then \eqref{E:simp} states that
\[
 \Odr_{x,j} \ \geq \ Ord\left(\left. P\right|_{[j,x]^\perp}-P(x),x\right).
\]
Now, as the monomials of the polynomial $(\Theta(x+\tau(B_2,-B_1))-\Theta(x))$ occur in the
aforementioned Taylor expansion, the degree of the first non-zero monomial in $\tau$ and 
$\overline{\tau}$ of the Taylor expansion in $(\tau,\overline{\tau})$ of
$(\rho(x+\tau(B_2,-B_1))-\rho(x))$ cannot be less than $\Odr_{x,j}$. Hence our claim.
 \item None of the assertions made thus far in this proof are affected.
\end{itemize}
From this point, we shall work with the defining function $(\er{W}+\rho(z))$, and we shall
simply write $P_{n,\rho}=:P_n, \ n\in \zahl_+$. It follows from an argument very similar to 
the one used in the proof of Lemma~\ref{L:loplush} that the functions  
\[
 u_j(s) \ := \ P_{2M_j}(\xi_j s,s), \; \; s\in \cplx,
\]
$j=1,\dots,N$, are subharmonic, non-harmonic functions. In particular, therefore, the $M_j$'s
defined above are integers. We invoke Result~\ref{R:subhBump} to obtain:
\begin{itemize}
 \item constants $c_2, B_2>0$ that depend only on $\rho$; and
 \item functions $h_j\in\smoo^\infty(\cplx\setminus\{0\})\bigcap\smoo^2(\cplx)$
 that are homogeneous of degree $2M_j, j=1,\dots,N$;
\end{itemize}
such that
\begin{align}
 h_j(s) \ &\geq \ c_2|s|^{2M_j} \; \; \forall s\in \cplx, \label{E:sizej2} \\
 \Mixdrv{s}{s}(u_j-\delta h_j)(s) \ &\geq \ \delta B_2|s|^{2(M_j-1)} \; \; \forall s\in \cplx, \;
	\forall\delta: 0<\delta\leq 1, \label{E:subhPos}
\end{align}
$j=1,\dots,N$. Lastly, in view of
Result~\ref{R:noell}, we can find a smooth function $H_0\geq 0$ that is homogeneous
of degree $2k$, and constants $B_3,\eps_0>0$ such that
\begin{align}
 \{z:H_0>0\} \ &= \ {\rm int}(\clBcone), \notag \\
 \levi(P-\delta H_0)&(z;V) \notag \\
 &\geq \ \delta B_3\|z\|^{2(k-1)} \ \|V\|^2 \quad
 \forall (z,v)\in\clBcone\times\CC, \ \text{and} \ \forall\delta\in(0,\eps_0).\label{E:leviNoell}
\end{align}

Let $\al>0$ be so small that
\begin{align}
 2\al \ &\leq \ \sigma_j, \; \;  j=1,\dots,N,\notag\\
 (\smcone{\xi_j}{2\al}\cap S^3)\bigcap(\clcone_*\cap S^3) \ &= \ \varnothing \quad\forall j\leq N,\notag\\
 (\smcone{\xi_j}{2\al}\cap S^3)\bigcap(\smcone{\xi_k}{2\al}\cap S^3) \ &= \ \varnothing
 \quad\text{if $j\neq k$}.
\end{align}  
Here, $S^3$ denotes the unit sphere in $\CC$. The parameter $\alpha$ will be used to define the 
cones $\clcone^1_j$ and $\clcone^2_j, \ j=1,\dots,N$. Let us now define
\[
 V_j \ := \ \smcone{\xi_j}{\al}\cap S^3, \quad\text{and}\quad
        W_j \ := \ \smcone{\xi_j}{2\al}\cap S^3.
\]
Let $\chi_j:S^3\lrarw [0,1]$ be a smooth cut-off function such that
$\chi_j|_{V_j}\equiv 1$ and ${\rm supp}(\chi_j)\subset W_j, \ j=1,\dots,N$. Let us
define $\Psi_j(z):=\chi_j(z/\|z\|) \ \forall z\in\CC\setminus\{0\}$. Finally, if 
$\Phi:\CC\lrarw \RR$ is a function that is homogeneous of degree $d>0$, then we shall
abuse notation somewhat and use the 
expression $\Psi_j(z)\Phi(z)$ as having the following meaning:
\[
 \Psi_j(z)\Phi(z) \ := \ \begin{cases}
                        \Psi_j(z)\Phi(z), &\text{if $z\neq 0$,} \\
                        0, &\text{if $z=0$.}
                        \end{cases}
\]
Note that $\Psi_j\Phi$ has the same regularity as $\Phi$ 
and is homogeneous of degree $d$. Before we define $G_\delta$, let us define a preliminary
function $\gee_\delta$:
\begin{equation}\label{E:gee}
 \gee_\delta(z) \
 := \ P(z)-\delta H_0(z)+\sum_{j=1}^N\left[u_j(z_2)-\delta\left(H_j(z)+h_j(z_2)\right)\right]\Psi_j(z)
	\; \; \forall z\in \CC.
\end{equation}
{\em{\bf{\em Note that}} if $\exepc(P)\ni \{z:z_2=0\}=:L_{j^0}$, then the sum on the right-hand side
above would contain the summand 
$\left[u_{j^0}(z_1)-\delta\left(H_{j^0}(z)+h_{j^0}(z_1)\right)\right]\Psi_{j^0}(z)$}. 
Also note that, as $\Psi_j\equiv 1$ on $\smcone{\xi_j}{\alpha}\bigcap S^3$, 
Proposition~\ref{P:leadBump} and \eqref{E:subhPos} imply that $\gee_\delta$ is {\em strictly}
plurisubharmonic on $\smcone{\xi_j}{\al}\setminus\{0\}, \ j=1,\dots,N$. But, since strict 
plurisubharmonicity is an open condition, we infer from continuity and homogeneity that
\begin{itemize}
 \item[$(i)$] $\exists \eps\ll 1$ such that
 $\gee_{\delta}$ is strictly plurisubharmonic on $\scone{\xi_j}{\al+\eps}$;
\end{itemize}
for each $j=1,\dots,N$, and for each $\delta:0<\delta\leq 1/2$.
In view of $(i)$ above we need to examine the Levi-form of $\gee_\delta$ on
$\scone{\xi_j}{2\alpha}\setminus\smcone{\xi_j}{\al+\eps}$. 
By our definition of the $\Psi_j$'s, we can find a $\beta>0$ such that
\[
(1-\Psi_j)(z) \ \geq \ \beta \quad \forall z\in
                \scone{\xi_j}{2\al}\setminus\smcone{\xi_j}{\al+\eps}, \; \; j=1,\dots,N.
\]
Furthermore, an application of \eqref{E:leviBS}, and an appeal to the properties of 
$P$ stated in Definition~\ref{D:almost-h-ext},
respectively, imply that there exists a constant $\gamma>0$ such that
\[
 z\in (\scone{\xi_j}{2\al}\setminus\smcone{\xi_j}{\al+\eps}) 
 \Rightarrow 
 \begin{cases}
  \levi{(P+u_j-\delta(H_j+h_j))}(z;V)\!\!\negthickspace&\geq 0, \\
  \qquad\qquad\qquad\qquad\quad \levi{P}(z;V)\!\!\negthickspace&\geq \gamma\|z\|^{2(k-1)}\|V\|^2,
 \end{cases}
\]
for each $V\in \CC$ and each $j=1,\dots,N$, and for each $\delta:0<\delta\leq 1/2$. For 
each $j=1,\dots,N$, let
us write $F_j(z_1,z_2):=H_j(z_1,z_2)+h_j(z_2)$. From the last three inequalities,
we can estimate:
\begin{align}
\levi{\gee_{\delta}}(z;V) \ &\geq \ 
 (1-\Psi_j)(z)\levi{P}(z;V)+\left(u_j(z_2)-\delta F_j(z)\right)\levi{\Psi_j}(z;V) \notag \\
	&\qquad +2\er\left[\sum_{\mu,\nu\leq 2}
		\monodrv{\mu}\Psi_j(z)\monodrv{\overline{\nu}}\left(u_j-\delta F_j\right)(z)
                V_\mu\overline{V_\nu}\right] \notag\\
 &\geq \ \beta\gamma\|z\|^{2(k-1)}\|V\|^2 -\delta S_1(z;V)- S_2(z;V) \label{E:geeLeviEst} \\
 &\qquad\qquad \forall z\in\scone{\xi_j}{2\al}\setminus\smcone{\xi_j}{\al+\eps} \
                \text{\em and} \ \forall V\in\CC, \notag
\end{align}
where $S_1(z;V)$ and $S_2(z;V)$ are as follows. We first consider $S_1(z;V)$, in which case we can
find a large constant $K_1>0$ such that
\begin{multline}\label{E:S_1}
 S_1(z;V) \ := \ 2\sum_{\mu,\nu\leq 2}
                \left|\monodrv{\mu}\Psi_j(z)\monodrv{\overline{\nu}}H_j(z)   
                V_\mu\overline{V_\nu}\right| + H_j(z)\left|\levi{\Psi_j}(z;V)\right| \\
		\leq \ K_1\|z\|^{2(k-1)}\|V\|^2 \; \; \;
		\forall (z,V)\in(\scone{\xi_j}{2\al}\setminus\smcone{\xi_j}{\al+\eps})\times\CC,
\end{multline}
for each $j=1,\dots,N$. In a similar way, we can find a large constant $K_2>0$ such that
\begin{equation}\label{E:S_2}
 S_2(z;V) \ \leq \ K_2|z_2|^{2(M_j-1)}\|V\|^2 \; \; \; \forall
(z,V)\in(\scone{\xi_j}{2\al}\setminus\smcone{\xi_j}{\al+\eps})\times\CC, 
\end{equation}
for each $j=1,\dots,N$. Let $\delta^*>0$ be so small that 
$\delta K_1\leq\beta\gamma/4 \ \forall \delta\in (0,\delta^*]$. Then, we can find an $r_0>0$
such that, in view of \eqref{E:S_1} and \eqref{E:S_2}, the following holds:
\begin{align}
 \levi{\gee_{\delta}}(z;V) \ 
	&\geq (\beta\gamma-\delta K_1)\|z\|^{2(k-1)}-K_2|z_2|^{2(M_j-1)}\|V\|^2 \notag \\
			   \ &\geq \frac{\beta\gamma}{2}\|z\|^{2(k-1)}\|V\|^2 \label{E:geePos}
				\\
 &\qquad\qquad \forall z\in(\scone{\xi_j}{2\al}\setminus\smcone{\xi_j}{\al+\eps})\bigcap\nball{2}(0;2r_0), 
		\notag \\
 &\qquad\qquad\forall V\in\CC,  \; \text{\em and} \; \forall\delta: 0<\delta\leq \delta^*,\notag
\end{align}
for each $j=1,\dots,N$.
Let us now set
\begin{align}
 \widetilde{H} \ &:= \ H_0+\sum_{j=1}^N\Psi_jH_j, \notag\\
 \delta_0 \ &:= \ \min(\eps_0,\delta^*). \notag
\end{align}
So far, in view of $(i)$ above and the bound \eqref{E:geePos}, we have accomplished the 
following:
\begin{enumerate}
 \item[$(ii)$] $\gee_{\delta}$ is {\em strictly} plurisubharmonic on $\nball{2}(0;2r_0)\setminus\{0\}$
 $\forall\delta\in(0,\delta_0)$.
 \item[$(iii)$] $\{z:\widetilde{H}>0\}=
 {\rm int}(\clcone_*)\bigcup(\bigcup_{j=1}^N(\scone{\xi_j}{2\al}\setminus L_j))$.
\end{enumerate}
We carry out the following three steps to transform $\gee_\delta$ to $G_\delta$:
\begin{itemize}
 \item[A)] We make a small perturbation of $\widetilde{H}$ to obtain a function
 $H$ having the same regularity as $\widetilde{H}$ and homogeneous of degree $2k$
 such that $H^{-1}\{0\}=\bigcup_{j=1}^N L_j$.
 \item[B)] Ensure that the perturbation in (A) is so small that if we set
 \begin{equation}\label{E:Gamma}
  \widetilde{\Gamma}_\delta(z):=P(z)-\delta H(z)
	+\sum_{j=1}^N\left(u_j(z_2)-\delta h_j(z_2)\right)\Psi_j(z)
        \; \; \forall z\in \CC,
 \end{equation}
 then $\widetilde{\Gamma}_\delta$ is strictly plurisubharmonic on $\nball{2}(0;2r_0)\setminus\{0\}$
 $\forall\delta\in(0,\delta_0)$. {\em The process of achieving this has been described in
 the proof of \cite[Theorem~1]{bharaliStensones:psb09} and, hence, we shall not repeat this argument.}
 \item[C)] Since $\widetilde{\Gamma}_\delta$ is strictly plurisubharmonic on 
 $\nball{2}(0;2r_0)\setminus\{0\}$, it is a well-known fact that we can extend 
 $\widetilde{\Gamma}_\delta$ to a function $G_\delta$ on $\CC$ that is strictly plurisubharmonic
 on $\CC\setminus\{0\}$ such that
 \begin{equation}\label{E:agree}
  \left. \widetilde{\Gamma}_\delta\right|_{\nball{2}(0;r_0)} \ = \
  \left. G_\delta\right|_{\nball{2}(0;r_0)} \; \; \; \text{\em for each $\delta\in(0,\delta_0)$}.
 \end{equation}
\end{itemize}
From (C) above, Part~$2a)$ of this proposition follows. If we define:
\begin{align}
 \clcone^k_j \ &:= \ \smcone{\xi_j}{k\al}, \; \; k=1,2, \notag \\
 U_{j,\delta} \ &:= \ u_j-\delta h_j, \notag
\end{align}
for each $j=1,\dots,N$, then, in view of \eqref{E:Gamma} and \eqref{E:agree}, Parts~$2b)$ and 
$2c)$ also follow. The next step of this proof is to
shrink $r_0$, to the extent necessary, to obtain an 
$r_\delta>0$ so that Parts~$2d)$ and $2e)$ follow.
\smallskip

\noindent{{\bf Step~2.} {\em Estimates on the size of $(G_\delta-\rho)$}}

\noindent{For the same reasons as described in Step~1, we shall argue as though
each $L_j$ is of the form $\{z:z_1=\xi_j z_2\}$ for some $\xi_j\in \cplx$. Analogous
arguments will follow if, for some $j^0\leq N$, $L_{j^0}=\{z:z_2=0\}$. So, let us fix
a $j\leq N$ and write
\begin{align}
 P_n(z) \ &= \ P_n((z_1-\xi_j z_2)+\xi_j z_2,z_2) \notag \\
	&\equiv
	\sum_{|\bmew|+|\bnew|=n}C_{\bmew\bnew}(j,n)
		(z_1-\xi_j z_2)^{\eta_1}(\zbar_1-\xibar_j \zbar_2)^{\eta_2}
		z_2^{\nu_1}\zbar^{\nu_2}, \notag 
\end{align}
for each $n=2k+1,\dots,2M_j$, and $j=1,\dots,N$. Here $\bmew$ and $\bnew$ denote
multi-indices. We clarify some notation
\begin{align}
 \bmew &=  (\eta_1,\eta_2), \qquad\qquad |\bmew| = \eta_1+\eta_2 =: \eta, \notag \\ 
 \bnew &=  (\nu_1,\nu_2), \qquad\qquad |\bnew| = \nu_1+\nu_2 =: \nu, \notag \\ 
 \mathfrak{supp}(j,n) &= \{(\eta_1,\eta_2,\nu_1,\nu_2)\in \nat^4:C_{\bmew\bnew}(j,n)\neq 0\}. \notag
\end{align}
Recall that, owing to the change of coordinate described in Step~1, 
$\left. P_n\right|_{L_j}\equiv 0$ for $n=2k+1,\dots,2M_j-1$. Thus, 
we can conclude the following facts:
\begin{itemize}
 \item For $n=2k+1,\dots, 2M_j-1$, the condition \eqref{E:simp} tells us that
 $(\eta_1,\eta_2,\nu_1,\nu_2)\in \mathfrak{supp}(j,n) \, \Longrightarrow \, \eta\geq \mu_j$.
 \item $u_j(s)=\sum_{(0,\bnew)\in  \mathfrak{supp}(j,2M_j)}
 C_{0\bnew}(j,2M_j)s^{\nu_1}\overline{s}^{\nu_2}$.
\end{itemize}
Hence, we get the following estimate
\begin{align}
 (\bmew,\bnew) \, \in \, &\mathfrak{supp}(j,n), \; 2k+1\leq n\leq 2M_j \; 
 \text{\em and} \; \eta\geq 1 \notag \\
 \Longrightarrow \ &\left|C_{\bmew\bnew}(j,n)
                (z_1-\xi_j z_2)^{\eta_1}(\zbar_1-\xibar_j \zbar_2)^{\eta_2}
                z_2^{\nu_1}\zbar^{\nu_2}\right| \notag \\
 &\leq \ \alpha^{\eta-\mu_j}|C_{\bmew\bnew}(j,n)||z_1-\xi_jz_2|^{\mu_j}|z_2|^{n-\mu_j} \; \; \;
	\forall z\in \smcone{\xi_j}{\al}.
		\label{E:junkEst1}
\end{align}
Furthermore, if we write
\[
 \rho(z) \ = \ \sum_{n=2k}^{2M_j}P_n(z)+\tail_j(z)
\]
for each $j=1,\dots,N$, then there exists a constant $K_3>0$ such that
\begin{equation}\label{E:junkEst2}
 |\tail_j(z)| \ \leq \ K_3\|z\|^{2M_j+1} \; \; \forall z\in \nball{2}(0;1).
\end{equation}
Then, in view of the estimates \eqref{E:sizej}, \eqref{E:sizej2}, \eqref{E:junkEst1} and
\eqref{E:junkEst2}, there is a large constant $K_4>0$ such that
\begin{align}
 (G_\delta-\rho)(z) \ \leq \ &-\delta\left(c_1|z_1-\xi_j z_2|^{\mu_j}|z_2|^{2k-\mu_j}
				+c_2|z_2|^{2M_j}\right) \notag \\
 	&\qquad+K_4\sum_{j=2k+1}^{2M_j}|z_1-\xi_j z_2|^{\mu_j}|z_2|^{n-\mu_j}+K_3\|z\|^{2M_j+1} \notag \\
	&\qquad\qquad\forall z\in \smcone{\xi_j}{\al}\bigcap\nball{2}(0;1), \notag
\end{align}
for each $j=1,\dots,N$. Since $n>2k$ in every occurrence of $n$ in the above inequality,
we can find an $r_\delta>0$ sufficiently small and a constant $C>0$ (independent of
$\delta$) such that
\begin{align}
 (G_\delta-\rho)(z) \ \leq \ -\delta C(|z_1-\xi_j z_2|^{\mu_j}|z_2|^{2k-\mu_j}+&|z_2|^{2M_j}) \notag \\
                &\forall (z_1,z_2)\in \smcone{\xi_j}{\al}\bigcap \nball{2}(0;r_\delta), \notag
\end{align}
for each $j=1,\dots,N$ and for each $\delta\in (0,\delta_0)$. This establishes Part~$2d)$.
As for Part~$2e)$, the recipe given by (A)-(C) above implies that 
$(G_\delta-\rho)(z)$ is dominated by a strictly negative function that is homogeneous
of degree $2k$ (see \cite[pp.~53-54]{bharaliStensones:psb09} for details) in
$\left(\CC\setminus \cup_{j=1}^N\smcone{\xi_j}{\al}\right)$, {\em provided $\|z\|$ is
small}. Thus, by arguments very similar to the preceding one, Part~$2e)$ follows.}
\end{proof}

\section{The proof of Theorem~\ref{T:main}}\label{S:main}
 
We begin by stating that several parts of this proof are based on ideas in the proof of Main 
Theorem~1 of \cite{bharaliStensones:psb09}. Define $\nu:=\lcm(m_1,m_2)$ (i.e. the least common 
multiple of $m_1$ and $m_2$) and write $\sigma_j:=\nu/m_j, \ j=1,2.$ Next, define the two 
proper holomorphic maps
\begin{align}
 \psi:\CCC\lrarw\CCC, \; \; \; \psi(s,t_1,t_2)&:=(s,t_1^{\sigma_1},t_2^{\sigma_2}), \notag \\
 \varPsi:\CC\lrarw\CC, \; \; \; \varPsi(t_1,t_2)&:=(t_1^{\sigma_1},t_2^{\sigma_2}). \notag
\end{align}
Finally, write $\varphi:=(P+Q)$ and $R:=\varphi\circ\varPsi$. Also, write
$D:=\{(w,z)\in V_\zt:\er{w}+\varphi(z)<0\}$. The following are easy to verify, and we
shall not dwell on the details:
\begin{itemize}
 \item $R$ is plurisubharmonic.
 \item Let
 \[
  \sum_{n=m_0}^\infty \Pi_{n,R}(t_1,t_2)
 \]
 denote the Taylor expansion of $R$ around $t=0$, with each
 $\Pi_{n,R}$ being the sum of all monomials having total degree $n$. Then, $m_0=\nu$.
 \item The domain $\psi^{-1}(D)$ is given by
 \[
  \omega_R \ := \ \{(s,t)\in \psi^{-1}(V_\zt):\er{s}+R(t)<0\}.
 \]
 \item Write $\Pi:=\Pi_{\nu,R}$. Then, $\Pi$ satisfies all the properties satisfied
 by $P:=P_{2k,R}$ of Proposition~\ref{P:homoBump} (refer to
 \cite[pp. 59-60]{bharaliStensones:psb09} for details).
 \item Furthermore, the domain $\omega_R$ satisfies all the hypotheses of 
 Proposition~\ref{P:homoBump}.
\end{itemize}
It would be useful to look more closely at the second assertion. We state the
following general fact for later use:
\smallskip

\begin{fact}\label{F:weights}
Let $\varphi$ be any $\smoo^\infty$-smooth function defined around $0\in \CC$, and
let $R:=\varphi\circ\varPsi$, where $\varPsi$ is as defined above. Let
$\Pi_{n,R}, \ n\in \nat$, be as defined above and let
\[
 \sum_\eta\mathcal{Q}_{\eta,\varphi}
\]
be a formal rearrangement of the terms of the Taylor expansion of $\varphi$ around $z=0$
such that 
$\mathcal{Q}_{\eta,\varphi}(r^{1/m_1}z_1,r^{1/m_2}z_2)=r^\eta\mathcal{Q}_{\eta,\varphi}(z_1,z_2) 
\ \forall r>0$. If $\Pi_{n,R}=\mathcal{Q}_{\eta,\varphi}\circ\varPsi$, then $n=\nu\eta$.
\end{fact}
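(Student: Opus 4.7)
The plan is to reduce the statement to a direct monomial-by-monomial computation, since both sides of the asserted equality $\Pi_{n,R}=\mathcal{Q}_{\eta,\varphi}\circ\varPsi$ are (by construction) homogeneous objects in a precise graded sense, and the map $\varPsi$ interacts cleanly with the relevant gradings.

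First, I would unpack the weighted-homogeneity hypothesis. Since $\mathcal{Q}_{\eta,\varphi}$ satisfies $\mathcal{Q}_{\eta,\varphi}(r^{1/m_1}z_1,r^{1/m_2}z_2)=r^\eta\mathcal{Q}_{\eta,\varphi}(z_1,z_2)$ for all $r>0$, each monomial $c\,z_1^{\alpha_1}\bar z_1^{\alpha_2}z_2^{\beta_1}\bar z_2^{\beta_2}$ appearing in its expansion must satisfy
\[
\frac{\alpha_1+\alpha_2}{m_1}+\frac{\beta_1+\beta_2}{m_2} \ = \ \eta.
\]
Here I am using that $\mathcal{Q}_{\eta,\varphi}$ is smooth (indeed a polynomial after the Taylor rearrangement), so each monomial in its real-analytic expansion must individually scale with weight $\eta$.

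Next I would pull back such a monomial under $\varPsi$. Because $\varPsi(t_1,t_2)=(t_1^{\sigma_1},t_2^{\sigma_2})$ with $\sigma_j=\nu/m_j$, and because $\overline{t_j^{\sigma_j}}=\bar t_j^{\sigma_j}$, one has
\[
(c\,z_1^{\alpha_1}\bar z_1^{\alpha_2}z_2^{\beta_1}\bar z_2^{\beta_2})\circ\varPsi \ = \ c\,t_1^{\sigma_1\alpha_1}\bar t_1^{\sigma_1\alpha_2}t_2^{\sigma_2\beta_1}\bar t_2^{\sigma_2\beta_2},
\]
whose total degree in $(t,\bar t)$ is
\[
\sigma_1(\alpha_1+\alpha_2)+\sigma_2(\beta_1+\beta_2) \ = \ \nu\!\left(\frac{\alpha_1+\alpha_2}{m_1}+\frac{\beta_1+\beta_2}{m_2}\right) \ = \ \nu\eta.
\]
Therefore every monomial of $\mathcal{Q}_{\eta,\varphi}\circ\varPsi$ has the same total degree $\nu\eta$, making $\mathcal{Q}_{\eta,\varphi}\circ\varPsi$ an (ordinary) homogeneous polynomial in $(t_1,\bar t_1,t_2,\bar t_2)$ of degree $\nu\eta$. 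Matching this against the defining property of $\Pi_{n,R}$ (which collects precisely the monomials of total degree $n$ in the Taylor expansion of $R$ at $0$), and using that the equality $\Pi_{n,R}=\mathcal{Q}_{\eta,\varphi}\circ\varPsi$ is nontrivial, we conclude $n=\nu\eta$.

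The only real subtlety---hardly an obstacle---is making sure that the pullback preserves ``weighted-homogeneous of weight $\eta$'' and sends it to ``ordinary homogeneous of degree $\nu\eta$'' at the level of individual monomials; once that is noted, the claim is simply the arithmetic identity $\nu/m_j\cdot m_j=\nu$ applied to each factor. No convergence issue arises, since we are comparing formal graded pieces of Taylor series, and each $\mathcal{Q}_{\eta,\varphi}$ is a (finite) polynomial by the weighted-homogeneity requirement combined with $m_1,m_2<\infty$.
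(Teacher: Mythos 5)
Your proof is correct, and the core observation (that the $(m_1,m_2)$-weighted grading in the $z$-variables is sent by $\varPsi$ to the ordinary grading in the $t$-variables, with weight $\eta$ going to degree $\nu\eta$) is the same as the paper's. The difference is one of implementation: you unpack the weighted homogeneity of $\mathcal{Q}_{\eta,\varphi}$ monomial by monomial, check the degree of each pulled-back monomial via the identity $\sigma_j m_j=\nu$, and conclude that $\mathcal{Q}_{\eta,\varphi}\circ\varPsi$ is homogeneous of degree $\nu\eta$. The paper works at the level of the whole function: it fixes a $t$ with $\Pi_{n,R}(t)\neq 0$ and compares the two ways of evaluating $\Pi_{n,R}(rt)$, namely $r^n\Pi_{n,R}(t)$ by ordinary homogeneity and $r^{\nu\eta}\mathcal{Q}_{\eta,\varphi}\circ\varPsi(t)$ by feeding $r^{\sigma_j}=(r^\nu)^{1/m_j}$ into the weighted-homogeneity identity, then matches exponents of $r$. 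The paper's route avoids any discussion of the Taylor coefficients and is slightly more economical; your route is more concrete and makes visible that the pulled-back monomials all land on a single graded piece, which is arguably more illuminating. One small point: you rightly flagged that the equality $\Pi_{n,R}=\mathcal{Q}_{\eta,\varphi}\circ\varPsi$ must be nontrivial for the conclusion to follow; the paper handles the same issue by fixing $t$ with $\Pi_{n,R}(t)\neq 0$. Both are fine, as the Fact is only invoked when the graded piece in question is nonzero.
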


\noindent{To see this simple fact, we fix a $t\in \CC$ such that $\Pi_{n,R}(t)\neq 0$. Thus, 
by definition
\[
 r^n\Pi_{n,R}(t) \ = \ \Pi_{n,R}(rt) \ = \ 
	\mathcal{Q}_{\eta,\varphi}(r^{\sigma_1}t_1^{\sigma_1},r^{\sigma_2}t_2^{\sigma_2}) \; \;
							\forall r>0.
\]
On the other hand
\[
 \mathcal{Q}_{\eta,\varphi}(r^{\sigma_1}t_1^{\sigma_1},r^{\sigma_2}t_2^{\sigma_2}) \
 = \ \mathcal{Q}_{\eta,\varphi}((r^\nu)^{1/m_1}t_1^{\sigma_1},(r^\nu)^{1/m_2}t_2^{\sigma_2})
 = \ r^{\nu\eta}\mathcal{Q}_{\eta,\varphi}\circ\varPsi(t) \; \; \forall r>0.
\]
Comparing the two equations above, we conclude that $n=\nu\eta$. The second assertion above
is just a special case of Fact~\ref{F:weights}.}
\smallskip

As in the proof of Proposition~\ref{P:homoBump}, {\em in the interests of brevity, 
we shall frame our arguments as though each curve $X\in \exepc(P)$ is of the form
\[
 \left\{(z_1,z_2):z_1^{m_1/\gcd(m_1,m_2)}=\xi z_2^{m_2/\gcd(m_1,m_2)}\right\}
\]
for some $\xi\in \cplx$, and the reasons are the same as for Proposition~\ref{P:homoBump}
(although see the remark following \eqref{E:veej}).}
It would be useful to look at the fourth assertion above in greater depth. First observe that
\[
 \frac{m_1}{\gcd(m_1,m_2)} \ = \ \sigma_2, \quad \frac{m_2}{\gcd(m_1,m_2)} \ = \ \sigma_1,
\]
and hence note that for any $\xi \in\cplx$ such that
$\{(z_1,z_2):z_1^{\sigma_2}=\xi z_2^{\sigma_1}\}\in\exepc(P)$, $\Pi$ is forced to be harmonic along
each of the complex lines that make up the set  
\[
 \cplxlns(\xi) \ := \ \bigcup_{l=0}^{\sigma_1\sigma_2-1}
 \left\{(t_1,t_2):t_1=|\xi|^{1/\sigma_1\sigma_2}\exp\left(\frac{2\pi il+i{\sf Arg}(\xi)}
 {\sigma_1\sigma_2}\right)t_2\right\}
\]
(here ${\sf Arg}$ denotes some branch of the argument). But since Proposition~\ref{P:homoBump}
is applicable to the domain $\omega_R$, there must be only finitely many complex lines of the 
above description. This implies Part~(1) of our theorem.
\smallskip   

Let us introduce some notation that we shall require. By Part~(1), and in view
of the italicised remark above, let us label the irreducible curves belonging
to $\exepc(P)$ as $X_1,\dots,X_N$, and let us assume that
\[
 X_j \ = \ \left\{(z_1,z_2):z_1^{m_1/\gcd(m_1,m_2)}=\xi_j z_2^{m_2/\gcd(m_1,m_2)}\right\}
\]
for some $\xi_j\in \cplx, \ j=1,\dots,N$. For each $j\leq N$, let us denote the 
collection of complex lines in the set $\cplxlns(\xi_j)$, as defined above, by
$\{L_{j0},L_{j1},\dots,L_{j(\sigma_1\sigma_2-1)}\}$. In keeping with this numbering scheme,
let $\mu_{jk}\in \zahl_+$ denote the numbers determined by \eqref{E:simp} for the pair $(\Pi,R)$.
Similarly, let $M_{jk}\in \zahl_+$ be the integer associated to the complex line 
$L_{jk}\in \exepc(\Pi)$ that is provided by the Proposition~\ref{P:homoBump}. Let
$\varDelta:=\max\{2M_{jk}:1\leq j\leq N, \ 0\leq k\leq\sigma_1\sigma_2-1\}$.
With these notations, we are ready to construct the desired functions.
\smallskip

\noindent{{\bf Step~1.} {\em Constructing $G$ and the auxiliary functions}}

\noindent{Consider the unitary transformations $R^{lm}:(z_1,z_2)\longmapsto 
(e^{2\pi il/\sigma_1}t_1,e^{2\pi im/\sigma_2}t_2)$, $l,m\in \zahl$. Recall that
if $\rho$ is as given by Proposition~\ref{P:homoBump} when applied to the domain
$\omega_R$, then $\rho$ is obtained by subtracting from $R$ all pluriharmonic terms of 
degree\,$\leq \varDelta$ occurring in the Taylor expansion of $R$ around $u=0$. By construction
\begin{align}
 \rho\circ R^{lm}(t) \ &= \ \rho(t) \quad \forall t\in \CC, \; \; \forall l,m\in \zahl, \label{E:inv1} \\
 \Pi\circ R^{lm}(t) \ &= \ \Pi(t) \quad \forall t\in \CC, \; \; \forall l,m\in \zahl. \label{E:inv2}
\end{align}
From the transformation properties of the Levi form, we conclude from \eqref{E:inv2} that
\[
 \levi{\Pi}(R^{lm}t;R^{lm}V) \ = \ \levi{\Pi}(t;V) \quad
 \forall (t,V)\in \CC\times\CC, \; \; \forall l,m\in \zahl. \notag
\]
Hence, if we define $\mathfrak{N}_{\Pi}(t)$ to be the null-space of $\levi{\Pi}(t;\bcdot)$, then
the above statement reveals that:
\begin{itemize}
 \item[$(i)$] With $\LeviDeg{\Pi}$ as defined in Section~\ref{S:results}, $t\in \LeviDeg{\Pi}$ and
 $V\in \mathfrak{N}_{\Pi}(t)$ if and only if  $R^{lm}(t)\in \LeviDeg{\Pi}$ and  
 $R^{lm}(V) \in\mathfrak{N}_{\Pi}(R^{lm}(t))$, $l,m\in \zahl$.
\end{itemize}
If, for each triple $(k,l,m)$ with $k,l,m$ belonging to the respective integer-ranges 
established above, we define $\varkappa(k,l,m)$ by the relation
\[
 0\leq \varkappa(l,m,n)\leq \sigma_1\sigma_2-1 \; \; \text{\em and} \; \; 
	\varkappa(l,m,n)\,\equiv\,(k+\sigma_2l-\sigma_1m)\,{\rm mod}(\sigma_1\sigma2),
\]
then 
\begin{equation}\label{E:lineTrans}
 R^{lm}(L_{jk}) \ = \ L_{j,\varkappa(k,l,m)}.  
\end{equation}
Now note that since $\gcd(\sigma_1,\sigma_2)=1$,
\begin{equation}\label{E:gcd}
 \text{\em For each $k=0,\dots,\sigma_1\sigma_2-1$, $\exists l,m\in \zahl$ such that
	$k=\sigma_2l-\sigma_1m$.}
\end{equation}
From the observation $(i)$, the prescription provided by Proposition~\ref{P:leadBump} for 
constructing each $H_{jk}$ associated
to a complex line $L_{jk}$, and from \eqref{E:lineTrans}, it is clear that
in the construction of the $\gee_{\delta}$ given by the equation \eqref{E:gee}:
\begin{itemize}
 \item[$(ii)$] For each $k$, we may set $\clcone^n_{jk}=R^{lm}\left(\clcone^n_{j0}\right)$,
 $n=1,2$, taking appropriate $l,m\in \zahl$;
 \item[$(iii)$] For each $k$, we may set $H_{jk}=H_{j0}\circ(R^{lm})^{-1}$ and 
 $\Psi_{jk}=\Psi_{j0}\circ(R^{lm})^{-1}$, taking appropriate $l,m\in \zahl$;
\end{itemize}
and the conclusions of Proposition~\ref{P:homoBump} will continue to hold true.
That we can {\em always} find appropriate $l,m\in \zahl$ for the purposes
of $(ii)$ and $(iii)$ follows from \eqref{E:gcd}.}
\smallskip

Result~\ref{R:noell} is applicable to $\Pi$. A careful examination of its proof
reveals that Noell's construction of the bumping is local. Hence, in view of \eqref{E:inv2}
and $(i)$, we can construct the function $H_0$ that we use in the proof of Proposition~\ref{P:homoBump},
as well as the perturbation described in (A) and (B) in the proof of Proposition~\ref{P:homoBump} in 
such a way that:
\begin{equation}\label{E:HNew}
 H\circ R^{lm}(t) \ = \ H(t) \; \; \forall u\in \CC \; \text{\em and} \;
 \forall l,m\in \zahl.
\end{equation}
And finally, owing to \eqref{E:inv1}:
\begin{align}
 M_{jk} \ &= \ M_{jk^*} \ =: \ M_j \; \; \text{\em for any $k\neq k^*$}, \notag \\ 
 u_{jk}(x) \ &= \ \Pi_{2M_j}\left(|\xi_j|^{1/\sigma_1\sigma_2}\exp\left(\frac{2\pi ik+i{\sf Arg}(\xi_j)}
 {\sigma_1\sigma_2}\right)x,x\right), \; \; \; \forall x\in \cplx,
 \label{E:subhRels}
\end{align} 
where $\ k=0,\dots,\sigma_1\sigma_2-1$. 
\smallskip

Recall that the change of coordinate mentioned in Proposition~\ref{P:homoBump} is represented by a 
biholomorphism of the form
\[
 \tau_1(s,t_1,t_2) \ := \ (s-q(t_1,t_2),t_1,t_2), \; \; (s,t_1,t_2)\in \CCC,
\]
where $q$ is the holomorphic polynomial that is the sum of all monomials in 
$t_1$ and $t_2$ whose real and imaginary parts are the pluriharmonic terms 
of some $\Pi_{n,R}$, $n=\nu+1,\dots,\varDelta:=\max_{j,k}2M_{jk}$. However, since
$R=\varphi\circ\varPsi$, {\em every monomial in $q$ is the product of integer powers 
of $t_1^{\sigma_1}$ and $t_2^{\sigma^2}$} (in fact, this observation is the implicit reason
for the the assertion \eqref{E:inv1} above). Thus, a diagram-chase reveals that 
there is a biholomorphism $\tau_2:\CCC\lrarw\CCC$ that makes the following
diagram commute:
\[
\begin{CD}
\CCC 			@>\psi>> 	\CCC \\
@V\tau_1=(s-q(t),t)VV			@VV\tau_2V \\
\CCC			@>>\psi>	\CCC
\end{CD}.
\]
Furthermore $\tau_2$ is of the form
\begin{equation}\label{E:tau2}
 \tau_2(w,z_1,z_2) \ = \ (w-f(z_1,z_2),z_1,z_2), \; \, (w,z_1,z_2)\in \CCC,
\end{equation}
where $f$ is a polynomial in $z_1$ and $z_2$. Let us write
$(\what,z_1,z_2):=\tau_2(w,z_1,z_2)$, and let 
\begin{equation}\label{E:intermRep}
 \OM\bigcap U_\zt \ = \ \{(\what,z):\er{\what}+\newR(z)+\newRem(\mi{\what},z)<0\}
\end{equation}
denote the local representation of $\OM$ relative to the new coordinate system
$(U_\zt;\what,z_1,z_2)$. Then, it is clear that
\[
 \psi\circ\tau_1(\omega_R) \ = \ \{(\what,z)\in \tau_2(U_\zt):\er{\what}+\newR(z)<0\}.
\]
We now pick and fix a value in $(0,\delta_0)$, say $\delta_0/2$, and use this to define the 
functions mentioned in the statement of our theorem. To this end, we define:
\begin{align}
 G(z) \
	&:= \ \frac{1}{\sigma_1\sigma_2}\sum_{a=1}^{\sigma_1}\sum_{b=1}^{\sigma_2}
	G_{\delta_0/2}\left(|z_1|^{1/\sigma_1}\exp\left(\frac{2\pi ia+i{\sf Arg}(z_1)}{\sigma_1}\right)
	\right., \label{E:G} \\   
	&\qquad\quad
	\left.|z_2|^{1/\sigma_2}\exp\left(\frac{2\pi ib+i{\sf Arg}(z_2)}{\sigma_2}\right)\right),\notag \\
 \clweg^l_j \ &:= \ \varPsi(\clcone^l_{j0}), \; \; \; l=1,2, \; \ j=1,\dots,N, \label{E:wedgeImp}
\end{align}
where all the objects on the right-hand sides of \eqref{E:G} and \eqref{E:wedgeImp} are as given
in Proposition~\ref{P:homoBump}.
We remark here that, owing to $(ii)$ above, we could as well have used $\clcone^l_{jk}$,
$k=0,\dots,\sigma_1\sigma_2-1$, in the definition \eqref{E:wedgeImp} for each fixed $j$. Let us now 
define
\begin{align}
 \mathcal{H}_0(z) \
 	&:= \ \frac{\delta_0}{2\sigma_1\sigma_2}\sum_{a=1}^{\sigma_1}\sum_{b=1}^{\sigma_2}
	H\left(|z_1|^{1/\sigma_1}\exp\left(\frac{2\pi ia+i{\sf Arg}(z_1)}{\sigma_1}\right)
	\right., \notag \\
	&\qquad\quad
	\left.|z_2|^{1/\sigma_2}\exp\left(\frac{2\pi ib+i{\sf Arg}(z_2)}{\sigma_2}\right)\right).\notag
\end{align}
As $H$ is homogeneous of degree $\nu$, $\mathcal{H}_0$ is $(m_1,m_2)$-homogeneous. The other
properties of $\mathcal{H}_0$ listed in Theorem~\ref{T:main} are immediate. Furthermore, note that:
\begin{align}
 P(z) \
        &= \ \frac{1}{\sigma_1\sigma_2}\sum_{a=1}^{\sigma_1}\sum_{b=1}^{\sigma_2}
        \Pi\left(|z_1|^{1/\sigma_1}\exp\left(\frac{2\pi ia+i{\sf Arg}(z_1)}{\sigma_1}\right)
        \right., \notag \\
        &\qquad\quad
        \left.|z_2|^{1/\sigma_2}\exp\left(\frac{2\pi ib+i{\sf Arg}(z_2)}{\sigma_2}\right)\right).
	\label{E:P}
\end{align}
Comparing the equations \eqref{E:G} and \eqref{E:P} with our definition of
$\mathcal{H}_0$, and keeping \eqref{E:wedgeImp} in mind, we already have the first and the
third properties listed at the end of Theorem~\ref{T:main}. The task of defining the
$v_j$'s is subtler. If we would like to recover the second property listed at the
end of Theorem~\ref{T:main}, we would first need to study the quantity
$\left.(G-P+\mathcal{H}_0)\right|_{\clweg^1_j}$. Accordingly, we first fix a $J\leq N$, 
then refer to the formula for $\widetilde{\Gamma}_{\delta_0/2}$ from 
\eqref{E:Gamma}, whence {\em for an $R>0$ sufficiently small}, we get:
\begin{align}
 (G-&P+\mathcal{H}_0)|_{{\clweg^1_J}} \notag \\
 	= \ &\frac{1}{\sigma_1\sigma_2}\sum_{k=0}^{\sigma_1\sigma_2-1}
	\sum_{a=1}^{\sigma_1}\sum_{b=1}^{\sigma_2}
        \Psi_{Jk}\left(|z_1|^{1/\sigma_1}\exp\left(\frac{2\pi ia+i{\sf Arg}(z_1)}{\sigma_1}\right)
        \right., \notag \\
        &\left.|z_2|^{1/\sigma_2}\exp\left(\frac{2\pi ib+i{\sf Arg}(z_2)}{\sigma_2}\right)\right)
	U_{Jk,\delta_0/2}
	\left(|z_2|^{1/\sigma_2}\exp\left(\frac{2\pi ib+i{\sf Arg}(z_2)}{\sigma_2}\right)\right)
        \notag \\
 	&\qquad\qquad\qquad \forall z\in\clweg^1_J\bigcap\nball{2}(0;R). \label{E:defv1}
\end{align}
We refer the reader to the proof of Proposition~\ref{P:homoBump}, and recall the numbering
scheme introduced above, for the meaning of the quantities $U_{jk,\delta_0/2}$ in 
\eqref{E:defv1}. Following \eqref{E:Gamma} strictly, there should be a double-sum
over all possible values of $(j,k)\in\{1,\dots,N\}\times\{0,\dots,\sigma_1\sigma_2-1\}$.
It is easy to see, however, that for $z\in \clweg^1_J$, the summands involving
the indices $j\neq J$ vanish. Note the following points that follow by construction:
\begin{itemize}
 \item Since, by definition, for each $j=1,\dots,N$, $\Pi_{2M_j}$ is the sum of monomials 
 of degree $2M_j$ that are products of integer powers of $t_l^{\sigma_l}$ and 
 $\overline{t}_l^{\sigma_l}$, $l=1,2$, the equation \eqref{E:subhRels} tells us that
 the second factor in the expression \eqref{E:defv1} does not change as $b$ ranges through
 the set $\{1,\dots,\sigma_2\}$.
 \item In view of $(ii)$, $(iii)$ and \eqref{E:wedgeImp},
 the first factor in \eqref{E:defv1} does not change as 
 $k$ ranges through the set $\{0,\dots,\sigma_1\sigma_2-1\}$.
\end{itemize}
In view of these two points, the equation \eqref{E:defv1} reveals that the
second property listed at the end of Theorem~\ref{T:main} is established if we define
\begin{equation}\label{E:veej}
 v_j(x) \ := \ 
 \frac{1}{\sigma_1\sigma_2}\sum_{k=0}^{\sigma_1\sigma_2-1}\left[
        \sum_{b=1}^{\sigma_2}U_{jk,\delta_0/2} 
        \left(|x|^{1/\sigma_2}\exp\left(\frac{2\pi ib+i{\sf Arg}(x)}{\sigma_2}\right)\right)\right]
	\; \; \forall x\in \cplx,
\end{equation}
for each $j=1,\dots,N$. The second property is established because, for a fixed
$J\leq N$ and $z\in\clweg^1_J$, the right-hand side of the above equation (with
$x$ replaced by $z_2$) is just a different way of expressing the right-hand side of
\eqref{E:defv1}. We claim that each $v_j$ is subharmonic. To see this, we note that the
functions within the square brackets in \eqref{E:veej} are subharmonic. This follows from
the following general fact:
\smallskip

\begin{fact}\label{F:properPush}
Let $D_1$ and $D_2$ be two domains in $\Cn$ and let
$p:D_1\lrarw D_2$ be a proper holomorphic map. If $u$ is a plurisubharmonic function
on $D_1$, then the function 
\[
 v(z) \ := \ \sum_{t\in p^{-1}\{z\}}u(t), \; \; z\in D_2,
\]
is plurisubharmonic on $D_2$.
\end{fact}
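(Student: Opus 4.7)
The plan is to exploit the branched-covering structure of a proper holomorphic map between equidimensional domains, combined with the removable-singularity theorem for plurisubharmonic functions. First I would observe that any proper holomorphic map $p:D_1\lrarw D_2$ between domains in $\Cn$ is surjective, open, and finite-to-one, with a generic fibre of some fixed cardinality $d\in\nat$. Set $B:=\{t\in D_1:\det p'(t)=0\}$; this is a proper analytic subvariety of $D_1$ (or empty), and by Remmert's proper mapping theorem $B':=p(B)$ is a proper analytic subvariety of $D_2$.

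On $D_2\setminus B'$ the restriction of $p$ is an unramified $d$-sheeted covering, so around each $z_0\in D_2\setminus B'$ there exist a neighbourhood $U$ and holomorphic sections $s_1,\dots,s_d:U\lrarw D_1$ of $p$ with $\{s_j(z):1\leq j\leq d\}=p^{-1}\{z\}$ for every $z\in U$. Thus $v=\sum_{j=1}^d u\circ s_j$ on $U$, which is plurisubharmonic as a finite sum of compositions of the plurisubharmonic $u$ with holomorphic maps; hence $v\in\psh(D_2\setminus B')$. The function $v$ is also locally bounded above on all of $D_2$: for any compact $K\subset D_2$ properness gives $p^{-1}(K)\Subset D_1$, the upper-semicontinuous $u$ has a finite upper bound $C_K$ on $p^{-1}(K)$, and therefore $v|_K\leq d\cdot C_K$. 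The classical removable-singularity theorem for plurisubharmonic functions then extends $v|_{D_2\setminus B'}$ uniquely to a function $\tilde v\in\psh(D_2)$, given by the upper-semicontinuous regularisation $\tilde v(z)=\limsup_{z'\to z,\,z'\notin B'}v(z')$.

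The step where I expect real care to be required is identifying $\tilde v$ with the pointwise $v$ at points of $B'$: a branch fibre $p^{-1}\{z_0\}$ can have strictly fewer than $d$ elements, so the naive sum $\sum_{t\in p^{-1}\{z_0\}}u(t)$ may fall short of $\limsup_{z\to z_0}v(z)$, and in full generality $v(z_0)$ must be interpreted as the multiplicity-weighted pushforward $\sum_{t\in p^{-1}\{z_0\}}m_t(p)\,u(t)$, where $m_t(p)$ denotes the local sheet-multiplicity of $p$ at $t$. In the application of this fact in the proof of Theorem~\ref{T:main} this obstacle is essentially cosmetic: the map $\varPsi$ branches only over $\{z_1z_2=0\}$, and the subharmonic functions $U_{jk,\delta_0/2}$ to which the fact is applied are homogeneous of positive degree and hence vanish at the origin, so the naive and multiplicity-weighted sums agree pointwise and plurisubharmonicity of the pointwise $v$ follows immediately from that of $\tilde v$.
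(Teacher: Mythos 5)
The paper states Fact~\ref{F:properPush} with no proof --- it is invoked as a known result from pluripotential theory --- so there is no in-text argument against which to compare. Your proof is the standard one and is correct: local triviality of $p$ over $D_2\setminus p(B)$ gives plurisubharmonicity there via local holomorphic sections, local boundedness above of $v$ follows from properness together with upper semicontinuity of $u$, and the removable-singularity theorem for plurisubharmonic functions across the pluripolar analytic set $p(B)$ yields the psh extension. You are also right to flag the multiplicity issue, and it is worth being explicit that the Fact is loosely worded as printed: with the unweighted sum over the point set $p^{-1}\{z\}$, the function $v$ can fail to be upper semicontinuous at branch points --- take $p(t)=t^2$ on the unit disc and $u\equiv 1$, so that $v\equiv 2$ off the origin while $v(0)=1$. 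The correct statement is the one you identify, namely the multiplicity-weighted pushforward $\sum_t m_t(p)\,u(t)$, and this is precisely what the paper's formulas \eqref{E:G} and \eqref{E:veej} compute: the inner sums always carry the full number of terms equal to the sheet number, counting repeated preimages with their multiplicities. Your reconciliation for the one-variable applications of the Fact (the $U_{jk,\delta}$ are homogeneous of positive degree, so both conventions give $0$ at the sole branch point $x=0$) is correct and tidy; for the two-variable application to $G_{\delta_0/2}$ via $\varPsi$, where the branch locus includes the coordinate axes and not just the origin, the cleaner observation is simply that \eqref{E:G} already is the multiplicity-weighted pushforward, to which your argument applies verbatim.
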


\noindent{Therefore $v_j$, being a positive linear combination of subharmonic functions, is 
subharmonic. In fact, as each $U_{jk,\delta}$ is, by construction, {\em strictly}
subharmonic away from $x=0$, and since the map $x\longmapsto x^{\sigma_2}$ is a
local biholomorphism away fron $x=0$, each $v_j$ is, in fact, strictly subharmonic
on $\cplx\setminus\{0\}$. {\em {\bf{\em Note that}} if for some $j\leq N$, say $j^0$, $X_{j^0}$ is
the $z_1$-axis}, then --- in view of the work done in Proposition~\ref{P:homoBump} --- 
there would be just a single $U_{j^0k,\delta_0/2}=:U_{j^0,\delta_0/2}$. Accordingly,
the formulas \eqref{E:G} and \eqref{E:veej} would reflect the following changes:
\begin{itemize}
 \item In defining $v_{j^0}$, the $U_{jk,\delta_0/2}$'s in \eqref{E:veej} would be 
 replaced by $U_{j^0,\delta_0/2}$ and every occurrance of $\sigma_2$ would be 
 replaced by $\sigma_1$.
 \item In \eqref{E:G}, $v_{j^0}$ would appear as a term depending on $z_1$.
\end{itemize}}
\smallskip

Since $G_{\delta_0/2}$ is, by construction, plurisubharmonic on $\CC$, we see from 
Fact~\ref{F:properPush}, and the formula \eqref{E:G}, that $G$ is plurisubharmonic on $\CC$.
At this stage, we only need to establish \eqref{E:bumped}. This will require one more
holomorphic change of coordinate.
\smallskip

\noindent{{\bf Step 2.} {\em Establishing the relation \eqref{E:bumped}}}

\noindent{Let $R_1>0$ be so small that $\nball{3}(0;R_1)\subset V_\zt$. Shrinking $R_1$ 
further if necessary, it follows from our definition \eqref{E:wedgeImp} of the $(m_1,m_2)$-wedges 
$\clweg^1_j$, $j=1,\dots,N$, and from Part~$2e)$ of Proposition~\ref{P:homoBump} (applied to
the domain $\omega_R$) that there is a constant $A_1>0$ such that
\begin{equation}\label{E:bound1}
 (G-\newR)(z) \ \leq \ -A_1(|z_1|^{m_1}+|z_2|^{m_2}) \; \; \; 
 \forall (z_1,z_2)\in \left(\CC\setminus \cup_{j=1}^N\clweg^1_j\right)\bigcap\nball{2}(0;R_1).
\end{equation}
Recall that $\newR$ is as given by \eqref{E:intermRep}. Let us now fix a $j\leq N$. Now, obviously:
\begin{align}
 (G-\newR)(z) \
        &:= \ \frac{1}{\sigma_1\sigma_2}\sum_{a=1}^{\sigma_1}\sum_{b=1}^{\sigma_2}
        (G_{\delta_0/2}-\rho)
	\left(|z_1|^{1/\sigma_1}\exp\left(\frac{2\pi ia+i{\sf Arg}(z_1)}{\sigma_1}\right)
        \right., \notag \\
        &\qquad\quad
        \left.|z_2|^{1/\sigma_2}\exp\left(\frac{2\pi ib+i{\sf Arg}(z_2)}{\sigma_2}\right)\right) 
	\notag \\
	&\equiv \frac{1}{\sigma_1\sigma_2}\sum_{a=1}^{\sigma_1}\sum_{b=1}^{\sigma_2}Y(z;a,b).
        \label{E:crudeDef}
\end{align}
Note that if $z\in \clweg^1_j$, then, from our discussions above, the arguments of the
functions constituting the sum above would fall into one of the cones
$\clcone^1_{jk}$, $k=0,\dots,\sigma_1\sigma_2-1$. Applying Part~$2d)$ of Proposition~\ref{P:homoBump},
we get
\begin{align}
 &\text{\em $z\in \clweg^1_j$ is such that 
 $\left(|z_1|^{1/\sigma_1}e^{i\sfrac{2\pi a+{\sf Arg}(z_1)}{\sigma_1}},
	|z_2|^{1/\sigma_2}e^{i\sfrac{2\pi b+{\sf Arg}(z_2)}{\sigma_2}}\right)$ lies} \notag \\
 &\text{\em in
	$\clcone^1_{jk}\bigcap\nball{2}(0;r_{\delta_0/2})$} \notag \\
 &\quad\Longrightarrow \
	Y(z;a,b) \notag \\
 &\quad \leq \ -\frac{C\delta_0}{2}
	\left\{\left||z_1|^{1/\sigma_1}e^{i\sfrac{2\pi a+{\sf Arg}(z_1)}{\sigma_1}}
 -\xi_{jk} |z_2|^{1/\sigma_2}e^{i\sfrac{2\pi b+{\sf Arg}(z_2)}{\sigma_2}}\right|^{\mu_j}
 |z_2|^{\sfrac{\nu-\mu_j}{\sigma_2}}+|z_2|^{\sfrac{2M_j}{\sigma_2}}\right\}, 
 \label{E:finer1}
\end{align}
where $\xi_{jk}:=
|\xi_{j}|^{1/\sigma_1\sigma_2}\exp\left(i\frac{2\pi k+{\sf Arg}(\xi_j)}{\sigma_1\sigma_2}\right)$,
and where we remind the reader that $\mu_{j0}=\dots=\mu_{j(\sigma_1\sigma_2-1)}=:\mu_j$ and
$M_{j0}=\dots=M_{j(\sigma_1\sigma_2-1)}=:M_j$. Thus, if $R_2>0$ is so small that
the the arguments of the
functions constituting the right-hand side of \eqref{E:crudeDef} are in $\nball{2}(0;r_{\delta_0/2})$,
then, from \eqref{E:crudeDef} and \eqref{E:finer1}, we have a non-negative function 
$\mathcal{N}_j:\nball{2}(0;R_2)\lrarw [0,\infty)$ such that
\begin{equation}\label{E:finer2}
 (G-\newR)(z) \ \leq \ -\frac{C\delta_0}{2}\mathcal{N}_j(z) \; \; \; 
	 \forall (z_1,z_2)\in \clweg^1_j\bigcap\nball{2}(0;R_2).
\end{equation}
Now, the whole point of the estimate in $2d)$ (applied to the domain $\omega_R$)
above was that there exists a constant
$A_2>0$ such that
\[
 |\rho(t)| \ \leq \ A_2(|t_1-\xi_{jk} t_2|^{\mu_j}|t_2|^{\nu-\mu_j}+|t_2|^{2M_j}) \; \; \;
	\forall t\in \clcone^1_{jk}\bigcap{\sf Dom}(\rho),
\]
for all relevant $(j,k)$. Hence, if $z\in \clweg^1_j$ satisfies the hypothesis of the
statement \eqref{E:finer1}, then $|\newR(z)|$ has as upper bound the quantity obtained
from the right-hand side of \eqref{E:finer1} with the factor $-(C\delta_0/2)$ replaced by $A_2$.
Eventually, therefore, we get the estimate
\begin{equation}\label{E:finer3}
 |\newR(z)| \ \leq \ A_2\mathcal{N}_j(z) \; \; \;
         \forall (z_1,z_2)\in \clweg^1_j\bigcap\nball{2}(0;R_2).
\end{equation}}
\smallskip

We are now ready to establish \eqref{E:bumped}. To this end, we define new coordinates
$(W,Z_1,Z_2)$ as follows
\[
 W \ := \ \what+K\what^2, \qquad\quad Z_l \ := \ z_l, \; \; l=1,2,
\]
where $K>0$. The mapping $(\what,z)\longmapsto (W,Z)$ is invertible in a neighbourhood
of $0\in \CCC$, whose size depends on the parameter $K$. We will choose a suitable $K>0$
so that \eqref{E:bumped} is achieved. Recall, from \eqref{E:intermRep}, the 
relation $\newRem(\mi{\what},z)=r(\mi{w},z)$. Let us write 
$\er{\what}=:\alpha$ and $\mi{\what}=:\beta$. In view of $(**)$, there exists an $A_3>0$ 
such that
\begin{align}
 |\newRem(\beta,z)| \ = \ |r(\mi{w},z)| \ &\leq \ A_3|\mi(\what+f(z))|^q \notag \\
					&\leq \ A_3(|\beta|^q+|\mi(f(z))|^q) \; \; 	  
 \forall (\what,z)\in \overline{\nball{3}(0;R_1/2)}, \label{E:rEst}
\end{align}
where $q:=\varDelta_1(\bdy\OM)+(1/\nu)$ by hypothesis, and
where $f$ is the polynomial described in \eqref{E:tau2}. Note that if we decompose $f$ as
\[
 f \ = \ \sum_{\eta}\mathcal{Q}_{\eta,f},
\]
where the right-hand side is as described in the statement of Fact~\ref{F:weights}, and
if each $\mathcal{Q}_{\eta,f}\circ\varPsi$ is the sum of pluriharmonic terms contained
in some $\Pi_{n,R}$, then $\eta\geq \nu+1$. This is because $\Pi_{\nu,R}$ itself contains
no pluriharmonic terms. Then, in view of Fact~\ref{F:weights} and \eqref{E:rEst}, 
and raising the value of $A_3>0$ if necessary, we have
\begin{equation}\label{E:fEst}
 |\newRem(\beta,z)| \ \leq \ A_3\left(|\beta|^q+
				(|z_1|^{m_1}+|z_2|^{m_2})^{q(\nu+1)/\nu}\right) \; \; 
				\forall z\in \overline{\nball{2}(0;R_1/2)}.
\end{equation}
By a similar appeal to Fact~\ref{F:weights} (and raising $A_3>0$ even further if necessary):
\begin{equation}\label{E:finer4}
 \mathcal{N}_j(z) \ \geq \ A_3^{-1}(|z_1|^{m_1}+|z_2|^{m_2})^{(\varDelta+1)/\nu} \; \;
         \forall (z_1,z_2)\in \clweg^1_j\bigcap\nball{2}(0;R_2),
\end{equation}
for each $j=1,\dots,N$. Recall that $\varDelta:=\max\{2M_{jk}:j\leq N, \ k=0,\dots,\sigma_1\sigma_2-1\}$.
\smallskip 

Note that we would be done if we could show that:

\begin{center}
\begin{tabular}{p{12.7cm}}
 {\em For each $(\what,z)\neq 0$ such that $(\alpha+\newR(z)+\newRem(\beta,z))=0$,
 provided $|\what|$ and $\|z\|$ are sufficiently small, 
 $(\er(\what+K\what^2)+G(z))<0$.}
\end{tabular}
\end{center}

\noindent{The above statement is true when
$z\in ((\CC\setminus \cup_{j=1}^N\clweg^1_j)\bigcap\nball{2}(0;R_1))$, for $R_1>0$ sufficiently
small, and for a suitable choice of $K>0$. This follows from the estimate \eqref{E:bound1}; it
was shown by Yu \cite[pp. 604-605]{yu:wblgK-Rmwpd95} using an idea of
Fornaess--Sibony \cite{fornaessSibony:cpfwpd89}.}
\smallskip

It thus remains to establish the above statement when $z\in \clweg^1_j$ (provided
$(\what,z)\neq 0$ and sufficiently close to the origin). We fix $j\leq N$ for the 
following calculation and write
\begin{align}
 \er(\what+K\what^2)+G(z) \ &= \ \alpha+K(\alpha^2-\beta^2)+G(z) \notag \\
	&= \ -K\beta^2-(\newR(z)+\newRem(\beta,z))+K(\newR(z)+\newRem(\beta,z))^2+G(z) \notag \\
	&\leq \ -K\beta^2+(G-\newR)(z) + |\newRem(\beta,z)| 
		+2K(\newR(z)^2+\newRem(\beta,z)^2) \notag \\
	&\qquad
	\text{\em provided $\alpha+\newR(z)+\newRem(\beta,z)=0$, $(\what,z)\neq 0$.} \label{E:1stCalc}
\end{align}
Let us write $R:=\min(R_1/2,R_2)$. Then, applying to \eqref{E:1stCalc} the
estimates \eqref{E:finer2}, \eqref{E:finer3} and \eqref{E:fEst}, and shrinking
$R>0$ if necessary so that $2K\newRem(\beta,z)^2\leq (K/2A_3)|\newRem(\beta,z)|$, we get
\begin{align}
 \er(\what+&K\what^2)+G(z) \notag \\
 \leq \ &-K\beta^2+\left[2KA_2^2\mathcal{N}_j^2(z)-\frac{C\delta_0}{2}\mathcal{N}_j(z)\right]
	+ (1+(K/2A_3))|\newRem(\beta,z)| \notag \\
 \leq \ &\left(-\frac{K}{2}+A_3\right)\beta^2 + 
  \left[2KA_2^2\mathcal{N}_j^2(z)-\frac{C\delta_0}{2}\mathcal{N}_j(z)\right] \notag \\
 &+ (A_3+\tfrac{1}{2}K)(|z_1|^{m_1}+|z_2|^{m_2})^{q(\nu+1)/\nu} \quad
	\forall z\in \clweg^1_j\bigcap(\nball{3}(0;R)\setminus\{0\}).
\end{align}
At this stage, {\em we fix $K>0$ once and for all} so that $(-(K/2)+A_3)\leq-1$ and is large
enough for Yu's argument; this determines the final holomorphic coordinates required for 
our theorem. With $K$ fixed, we can shrink
$R>0$ further if necessary and apply \eqref{E:finer4} so that the following inequalities follow
(let us write $A:=A_3+\frac{1}{2}K$, $B:=C\delta_0/4$ and $\Sigma(z):=|z_1|^{m_1}+|z_2|^{m_2}$):
\begin{align}
 \er(\what+K\what^2)+G(z) \
 &\leq \ -\beta^2-B\mathcal{N}_j(z)+A(|z_1|^{m_1}+|z_2|^{m_2})^{q(\nu+1)/\nu} \notag \\
 &\leq \ -\beta^2-(B/A_3)\Sigma(z)^{(\varDelta+1)/\nu}+A\Sigma(z)^{q(\nu+1)/\nu} \notag \\
 &\qquad\qquad
	\forall z\in \clweg^1_j\bigcap(\nball{3}(0;R)\setminus\{0\}). \label{E:2ndCalc}
\end{align}
The curves $X_j$ have order of contact $2M_j/\nu$ with $\bdy\OM$ at the
origin. Hence $\varDelta_1(\bdy\OM)\geq \varDelta/\nu$. This means
\[
 \left(\varDelta_1(\bdy\OM)+\frac{1}{\nu}\right)\frac{\nu+1}{\nu} \ > \ 
 \frac{\nu\varDelta_1(\bdy\OM)+1}{\nu} \ \geq \  \frac{\varDelta+1}{\nu}.
\]
Applying this to \eqref{E:2ndCalc} affords us an $R>0$ sufficiently small that
\[
 \er(\what+K\what^2)+G(z) \ \leq \ -\beta^2-\frac{B}{2A_3}\Sigma(z)^{(\varDelta+1)/\nu} \
 < \ 0 \; \; \forall z\in \clweg^1_j\bigcap(\nball{3}(0;R)\setminus\{0\}),
\]
for each $j=0,\dots,N$. In view of our preceding remarks, the proof is complete. \qed
\smallskip



\begin{thebibliography}{89}

\bibitem{bedfordFornaess:cpfwpd78}
E. Bedford and J.E. Fornaess, {\em A construction of peak functions on weakly pseudoconvex 
domains},
Ann. of Math. {\bf 107} (1978), 555-568.

\bibitem{bharaliStensones:psb09}
G. Bharali and B. Stens{\o}nes, {\em Plurisubharmonic polynomial polynomials and bumping},
Math. Z. {\bf 261} (2009), 39-63.

\bibitem{catlin:bipd84}
D. Catlin, {\em Boundary invariants of pseudoconvex domains}, Ann. of Math.(2) {\bf 120} (1984), 529-586.

\bibitem{chnKamimtOhsawa:bBki04}
B.-Y. Chen, J. Kamimoto and T. Ohsawa, {\em Behavior of the Bergman kernel at infinity},
Math. Z. {\bf 248} (2004), 695-–708. 

\bibitem{d'angelo:rhoca82}
J.P. D'Angelo, {\em Real hypersurfaces, orders of contact, and applications}, Annals of Math.
{\bf 115} (1982), 615-637.

\bibitem{diederichFornaess:pd:bspef77}
K. Diederich and J.E. Fornaess, {\em Pseudoconvex domains: bounded strictly plurisubharmonic 
exhaustion functions}, Invent. Math. {\bf 39} (1977), 129-141.

\bibitem{diederichFornaess:phmpdrab79}
K. Diederich and J.E. Fornaess, {\em Proper holomorphic maps onto pseudoconvex domains with
real-analytic boundary}, Ann. of Math. {\bf 110} (1979), 575-592.

\bibitem{diederichHerbort:pdst94}
K. Diederich and G. Herbort, {\em Pseudoconvex domains of semiregular type}, Contributions to
Complex Analysis and Analytic Geometry, Aspects Math. (1994), Vieweg, Braunschweig, pp. 127-161.

\bibitem{fornaess:sneDbC286}
J.E. Fornaess, {\em Sup-norm estimates for $\overline{\partial}$ in $\cplx^2$}, Ann. of Math.
{\bf 123} (1986), 335-345.

\bibitem{fornaessSibony:cpfwpd89}
J.E. Fornaess and N. Sibony, {\em Construction of p.s.h. functions on weakly pseudoconvex domains},
Duke Math. J. {\bf 58} (1989), 633-655.
  
\bibitem{Noell:pfpd93}
A. Noell, {\em Peak functions for pseudoconvex domains}, Several Complex Variables - Proceedings of
the Mittag-Leffler Institute, 1987-1988, Princeton University Press, Princeton, NJ, 1993, pp.
529-541.

\bibitem{range:ikHeDbpdftC290}
R.M. Range, {\em Integral kernels and H{\"o}lder estimates for $\overline{\partial}$ on
pseudoconvex domains of finite type in $\cplx^2$}, Math. Ann. {\bf 288} (1990), 63-74.

\bibitem{yu:pfwpd94}
J.Y. Yu, {\em Peak functions on weakly pseudoconvex domains}, Indiana Univ. Math. J. {\bf 43}
(1994), 1271-1295.

\bibitem{yu:wblgK-Rmwpd95}
J. Yu, {\em Weighted boundary limits of the generalized Kobayashi-Royden metrics on
weakly pseudoconvex domains}, Trans. Amer. Math. Soc. {\bf 347} (1995), 587-613.

\end{thebibliography}
\end{document}